\theoremstyle{plain}
\newtheorem{theorem}{Theorem}[section]
\newtheorem{lemma}[theorem]{Lemma}
\newtheorem{proposition}[theorem]{Proposition}
\newtheorem{remark}[theorem]{Remark}
\newtheorem*{claim*}{Claim}
\theoremstyle{definition}
\newtheorem{definition}[theorem]{Definition}
\newtheorem{example}[theorem]{Example}
\DeclareMathOperator{\Aut}{Aut}
\DeclareMathOperator{\PGL}{PGL}
\DeclareMathOperator{\GL}{GL}
\DeclareMathOperator{\Doubles}{Doubles}
\DeclareMathOperator{\OL}{OL}
\DeclareMathOperator{\OLExt}{OLExt}
\DeclareMathOperator{\OLExtArrs}{OLExtArrs}
\newcommand{\R}{{\mathbb R}}
\newcommand{\C}{{\mathbb C}}
\newcommand{\Q}{{\mathbb Q}}
\newcommand{\Z}{{\mathbb Z}}
\definecolor{gray}{rgb}{.5,.5,.5}
\definecolor{black}{rgb}{0,0,0}
\definecolor{blue}{rgb}{0,0,1}
\definecolor{red}{rgb}{1,0,0}
\definecolor{green}{rgb}{0,1,0}
\definecolor{gold}{rgb}{.5,.5,.2}
\definecolor{yellow}{rgb}{1,1,.4}
\definecolor{purple}{rgb}{.5,0,.5}
\definecolor{darkgreen}{rgb}{0,.5,0}
\definecolor{orange}{rgb}{1,.55,0}
\definecolor{white}{rgb}{1,1,1}
\title{Moduli Spaces of One-Line Extensions of $(10_3)$ Configurations}
\author{Moshe Cohen}
\address{Mathematics Department, State University of New York at New Paltz, New Paltz, New York, USA}
\email{cohenm@newpaltz.edu}
\author{Baian Liu}
\address{Department of Mathematics, The Ohio State University, Columbus, Ohio}
\email{liu.7358@osu.edu}
\begin{document}

	\begin{abstract}
Two line arrangements in $\mathbb{CP}^2$ can have different topological properties even if they are combinatorially isomorphic.  Results by Dan Cohen and Suciu and by Randell show that a reducible moduli space under complex conjugation is a necessary condition.

We present a method to produce many examples of combinatorial line arrangements with a reducible moduli space obtained from a set of examples with irreducible moduli spaces. 

		In this paper, we determine the reducibility of the moduli spaces of a family of arrangements of 11 lines constructed by adding a line to one of the ten $(10_3)$ configurations. Out of the four hundred ninety-five combinatorial line arrangements in this family, ninety-five have a reducible moduli space, seventy-six of which are still reducible after the quotient by complex conjugation.
	\end{abstract}

\keywords{$(n_3)$ configuration, geometric matroid, extension by an element}
\subjclass[2020]{52C35, 32S22, 14N20, 14H37, 14Q05}

	\maketitle

\section{Introduction}
Hyperplane arrangements are not only classically interesting \cite{os} but have been studied more recently \cite{dimca}.
  These objects have interesting combinatorial structures
\cite{oriented, levigraph} and even have applications to database searches \cite{database}.  Interesting questions concerning hyperplane arrangements include those regarding whether the combinatorics determine topological properties \cite{dimca:free}. 

We focus on hyperplane arrangements in two dimensions: line arrangements. We define a \textbf{combinatorial line arrangement} $\mathcal{A} = (\mathcal{P}, \mathcal{L})$ to be a finite set of \textbf{points} $\mathcal{P}$ and a finite set of \textbf{lines} $\mathcal{L}$, which are subsets of $\mathcal{P}$. We also require that the intersection of two distinct lines be one point or empty.  Given a combinatorial line arrangement $\mathcal{A}$, we consider its \textbf{moduli space} $\mathcal{M}_\mathcal{A}$, its set of all geometric realizations in $\mathbb{CP}^2$.  Elements of the same moduli space are combinatorially equivalent. 

The number of components of a moduli space gives us information about the topology of the arrangement.  In an irreducible moduli space or in a single connected component, Randell's Isotopy Theorem \cite{randell} states that any two arrangements $\mathcal{A}_1$ and $\mathcal{A}_2$ in 1-parameter family of combinatorially equivalent arrangements have diffeomorphic complements and that  ($\mathbb{CP}^2$,$\mathcal{A}_1$) is homeomorphic to ($\mathbb{CP}^2$,$\mathcal{A}_2$).  Furthermore, a result by Dan Cohen and Suciu \cite[Theorem 3.9]{CS} states that complex conjugate arrangements have equivalent braid monodromies and so also have diffeomorphic complements.

Thus we seek combinatorial line arrangements with a reducible moduli space $\mathcal{M}_\mathcal{A}$ and a reducible moduli space modulo complex conjugation $\mathcal{M}_\mathcal{A}^\C$.  
We construct numerous such examples of combinatorial line arrangements using one-line extensions. We say that a \textbf{one-line extension} of a combinatorial line arrangement $\mathcal{A}$ is $\mathcal{A}$ together with an additional line.  More commonly in the literature, matroids are extended by single elements;  according to Oxley, this ``can be fraught with difficulty'' \cite{oxley}.  Our method is a specific type of single-element extension, in a dual sense.  For more on matroids, see Oxley's textbook \cite{matroid}.  Kocay \cite{one} uses a method of one-point extensions in order to find coordinatizations of arrangements.

We apply one-line extensions to $(10_3)$ configurations. More generally, an \textbf{($\bf n_3$) configuration} is a combinatorial line arrangement with $n$ lines and $n$ triple points, which are points that lie in exactly three lines or, in other words, have multiplicity three.  Such arrangements are not only classically interesting \cite{martinetti, daub, gropp, sw} but have been studied more recently \cite{grunbaum, revisited, ProjPlane, one, 133}.

The reason we apply one-line extension to $(10_3)$ configurations is to find combinatorial line arrangements with 11 lines and reducible moduli space. Combinatorial line arrangements with 10 or fewer lines and a reducible moduli space have been classified. In 1997, Fan showed that there are no combinatorial line arrangements with a reducible moduli space of up to and including 6 lines \cite{fan}. Garber, Teicher, and Vishne showed that there are combinatorial line arrangements with a reducible moduli space of up to 8 lines that are realizable in $\mathbb{RP}^2$ \cite{gtv}. Nazir and Yoshinaga verified that for combinatorial line arrangements of up to and including 9 lines, those with a reducible moduli space must contain one of three combinatorial line arrangements as a subarrangement: the MacLane arrangement, also known as the M\"obius-Kantor arrangement or the unique ($8_3$) configuration  \cite{Kantor, Mobius1828, Reye1882, Schroter1889, Mac}; the Nazir-Yoshinaga arrangement \cite{ny}; and the Falk-Sturmfels arrangement \cite[cited as unpublished]{CS}.  
We say an arrangement is \textbf{exceptional} if it contains one of these three arrangements as a subarrangement and \textbf{unexceptional} otherwise.

Suppose a combinatorial line arrangement $\mathcal{A}$ contains a line $\ell$ that passes through at most two points of multiplicity three or greater. Form another combinatorial line arrangement $\mathcal{A}\backslash\ell$ by deleting $\ell$ from $\mathcal{A}$.  Nazir and Yoshinaga show that if the moduli space of $\mathcal{A}\backslash\ell$ is irreducible, then the moduli space of $\mathcal{A}$ is irreducible \cite[Lemma 3.2]{ny}. 
  We say a combinatorial line arrangement is \textbf{reductive} if it contains a line that passes through at most two points of multiplicity three or greater and \textbf{non-reductive} otherwise.

The first author with Amram, Teicher, and Ye completed the classification of irreducibility of the moduli space of non-reductive, unexceptional arrangements of 10 lines in \cite{aty} and \cite{amram}, producing eighteen examples.  
Further analysis of these authors together with Sun and Zarkh reduced the number of candidates from eighteen to fifteen \cite{acstyz}.  
Motivating this work, two of the nine combinatorial line arrangements from \cite{amram} with a reducible moduli space are on the list of the eleven one-line extensions of ($9_3$) configurations.

Amram, Gong, Teicher, and Xu classify the moduli spaces of non-reductive arrangements of 11 lines with at least one point of multiplicity at least five in \cite{agtx}, identifying thirty-eight arrangements that satisfy the necessary moduli space condition.  
 Further interesting examples of arrangements of 11 lines from the literature include a reductive, exceptional example by Artal Bartolo, Carmona Ruber, Cogolludo-Agust\'in, and Marco Buzun\'ariz and twenty-nine examples by Guerville-Ball\'e \cite{GB29}.

	In this current work, we describe the one-line extension construction. While it does not produce reducible moduli spaces all of the time, it can empirically produce reducible moduli spaces a lot of the time. We use this to continue the classification of moduli spaces of non-reductive combinatorial line arrangements of 11 lines.

\medskip

\textbf{Main Results. }  Given a $(10_3)$ configuration, we determine the number of possible ways to add an eleventh line through some number of the double points:  at least three so that the arrangement is not reductive; and at most five because a line through six existing doubles must belong to an arrangement of at least 13 lines.  Over the ten $(10_3)$ configurations, this gives: a subtotal of three hundred thirty-six arrangements, fifteen of which appear twice, for three double points; a subtotal of one hundred eighty-eight arrangements, thirty-seven of which appear twice, 
 for four double points; and a total of twenty-three arrangements for five double points.  We then classify the moduli spaces of these arrangements.  A summary of these results can be found in Tables \ref{tab:main3}, \ref{tab:main4}, and \ref{tab:main5}.

\begin{theorem}
\label{thm:main}
Out of the three hundred twenty-one distinct arrangements obtained by adding an eleventh line through three double points in one of the ten $(10_3)$ configurations, just one of them has a reducible moduli space modulo complex conjugation:  $(10_3)_7.ADO$ as discussed in Example \ref{ex:ADO}.
\end{theorem}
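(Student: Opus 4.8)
The statement is a finite classification, and I would establish it in three stages: making the enumeration of the $321$ arrangements precise, reducing the moduli‑space question to a family of explicit polynomial computations, and then carrying those out, with only $(10_3)_7.ADO$ surviving.

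First I would pin down the bookkeeping behind the ``three hundred twenty-one.'' Starting from the classical list of the ten combinatorial $(10_3)$ configurations $\mathcal{C}_1,\dots,\mathcal{C}_{10}$ together with their automorphism groups, observe that each $\mathcal{C}_i$ has exactly $\binom{10}{2}-3\cdot 10 = 15$ double points. A candidate eleventh line is determined by a choice of three of these double points, no two of which lie on a common line of $\mathcal{C}_i$ (otherwise that line and the new line would share two points); one also discards the triples whose forced collinearity would push the new line through a fourth double point or through a triple point, since such a choice represents a different combinatorial type. Counting such triples up to $\Aut(\mathcal{C}_i)$ and then identifying arrangements that arise from two different configurations produces the subtotal $336$ with $15$ coincidences, hence $321$ distinct combinatorial line arrangements; this is precisely the content of the enumeration summarized in the Main Results and Table~\ref{tab:main3}.

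For the moduli spaces I would use the forgetful map $\pi\colon\mathcal{M}_{\mathcal{A}}\to\mathcal{M}_{\mathcal{C}_i}$ that restricts a realization of $\mathcal{A}=\mathcal{C}_i\cup\ell$ to its sub-configuration $\mathcal{C}_i$. Since the three chosen double points are distinct and determine $\ell$ uniquely, $\pi$ is injective and identifies $\mathcal{M}_{\mathcal{A}}$ with a locally closed subset of $\mathcal{M}_{\mathcal{C}_i}$: the vanishing locus of the $3\times 3$ ``collinearity determinant'' $D$ of those three points, minus the closed locus on which $\ell$ acquires an extra incidence or the realization degenerates. One then works configuration by configuration: fix a projective frame to obtain a rational parametrization of $\mathcal{M}_{\mathcal{C}_i}$ (for the $(10_3)$ configurations these are small and explicitly describable, and turn out to be irreducible), write $D$ in those coordinates, and factor it. If $D$ vanishes identically, $\mathcal{M}_{\mathcal{A}}$ is dense in $\mathcal{M}_{\mathcal{C}_i}$; if $D$ is a nonzero irreducible (prime) element of the coordinate ring, $\{D=0\}$ is irreducible; in either case $\mathcal{M}_{\mathcal{A}}$ is irreducible whenever it is nonempty. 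When $D$ factors, one reads off the irreducible components of $\mathcal{M}_{\mathcal{A}}$ over $\mathbb{C}$ and computes how complex conjugation — which acts because all defining equations have real coefficients — permutes them; $\mathcal{M}_{\mathcal{A}}^{\mathbb{C}}$ is reducible exactly when that permutation has an orbit structure other than ``one real component'' or ``one conjugate pair.'' The goal of this stage is to show that for $320$ of the $321$ arrangements the outcome is one of: empty, irreducible, or a single conjugate pair; the conjugate-pair cases are precisely those (counted elsewhere in the paper) whose $\mathcal{M}_{\mathcal{A}}$ is reducible but whose $\mathcal{M}_{\mathcal{A}}^{\mathbb{C}}$ is not.

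The main obstacle is that last stage: it is a large, essentially computational case analysis, and its delicate point is not counting components but certifying the \emph{real structure} — ruling out, for every arrangement except $(10_3)_7.ADO$, both a pair of components each defined over $\mathbb{R}$ and a splitting into three or more components. Containment of an exceptional subarrangement is a useful organizing device, since it forces a factorization of $D$ coming from an already-known reducible sub-moduli-space, but by itself it does not decide the conjugation question, so each such arrangement still has to be checked individually. Finally, for the single surviving arrangement I would carry out the explicit coordinate computation of Example~\ref{ex:ADO}, exhibiting components of $\mathcal{M}_{(10_3)_7.ADO}$ that conjugation does not merge, so that $\mathcal{M}_{(10_3)_7.ADO}^{\mathbb{C}}$ genuinely has more than one component; combined with the verification that the other $320$ arrangements fail this test, the theorem follows.
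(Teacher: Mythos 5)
Your overall strategy is the one the paper follows: enumerate the extensions up to isomorphism using the automorphism groups, then settle each of the finitely many cases by an explicit algebraic description of its moduli space and of the conjugation action on its components, with $(10_3)_7.ADO$ the lone survivor. But two points in your enumeration stage, as written, would not reproduce the census of three hundred twenty-one arrangements. First, you propose to ``discard the triples whose forced collinearity would push the new line through a fourth double point or through a triple point.'' Combinatorially nothing is forced: the extension $\mathcal{A}\cup L$ passes through exactly the chosen double points, and the only admissibility condition is that no two chosen doubles share a line. The paper keeps all such triples; when geometry forces an unrecorded incidence this simply manifests as an \emph{empty} moduli space, and thirty-six of the three hundred twenty-one arrangements in Table \ref{tab:main3} are of this kind, so discarding them changes both the count and the statement being proved. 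Second, ``triples up to $\Aut(\mathcal{C}_i)$, then identify across configurations'' is not quite the right bookkeeping: the $\Aut$-quotient yields three hundred thirty-seven classes, and one identification happens \emph{within} $(10_3)_5$ via an isomorphism that does not preserve the original ten lines (Lemma \ref{self-exchange}); only afterwards do the fifteen cross-configuration identifications of Lemma \ref{general exchange} bring the total to three hundred twenty-one.

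In the moduli stage your restriction map $\pi$ and the collinearity determinant $D$ are fine (injectivity holds since two of the three doubles determine the new line), but ``factor $D$ and read off the components over $\C$'' conceals the step that is the paper's actual technical content. The base moduli spaces are themselves presented by parameters subject to constraints (e.g.\ $(10_3)_5$ in Example \ref{ex:ANOFirstParameterization}), so primality of $D$ in that coordinate ring is not decided by factoring a polynomial; the paper instead re-chooses, for each extension, a projective basis and construction order so that the closure of the moduli space becomes $V(h_1)$ for a \emph{single} polynomial in a polynomial ring (compare Examples \ref{ex:ANOFirstParameterization} and \ref{ex:ANO}), or a finite set, and then certifies absolute irreducibility of $h_1$ by Gao's Newton-polytope criterion (Theorem \ref{critc}) combined with reduction modulo a prime and Gauss's lemma (Lemma \ref{lem:zirred}), with the modular criterion of Theorem \ref{critcmodular} covering the remaining cases. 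Your criterion for reducibility of $\mathcal{M}_\mathcal{A}^\C$ in terms of conjugation orbits of components is correct, but the case analysis also requires checking that each component of $V(h_1)$ meets the non-degenerate locus (and, for the connectivity statements, whether component intersections do), which is the intersection analysis described in Section \ref{sect:moduli}. With those repairs your outline matches the paper's argument.
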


\begin{theorem}
\label{thm:four}
Out of the one hundred fifty-one 
 distinct arrangements obtained by adding an eleventh line through four 
 double points 
 in one of the ten $(10_3)$ configurations, seventy-four of them have a reducible moduli space modulo complex conjugation.  
These are listed in Table \ref{tab:74cases} at the end of the Introduction.
\end{theorem}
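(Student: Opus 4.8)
The plan is to carry out, for each combinatorial arrangement in the family, an explicit computation of its moduli space and then to read off the reducibility data, in three stages: enumeration, computation of each moduli space, and extraction of the count. First I would enumerate the family. For each of the ten configurations $(10_3)_i$ list its fifteen simple double points, and then list the $4$-element subsets $S$ of these doubles such that no original line contains two points of $S$ (otherwise the new line $\ell_S$ through $S$ would meet that line twice, which is forbidden); these are the combinatorially admissible one-line extensions through four doubles, and they total the one hundred eighty-eight arrangements of the statement. Two such arrangements — possibly arising from different base configurations — must then be identified whenever their incidence structures are isomorphic as combinatorial line arrangements; carrying out this isomorphism search exhaustively collapses the thirty-seven coincident pairs and leaves the one hundred fifty-one distinct arrangements. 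I also record that every arrangement so obtained is non-reductive: each of the ten original lines still passes through its three triple points and the new line $\ell_S$ passes through the four points of $S$, which have become triple points, so the hypothesis of the Nazir--Yoshinaga reductivity lemma fails for every line and reducibility cannot be excluded cheaply.

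Second, for a fixed admissible arrangement $\mathcal{A}=\mathcal{A}_0\cup\{\ell_S\}$, where $\mathcal{A}_0$ is the underlying $(10_3)_i$, I would realize $\mathcal{M}_\mathcal{A}$ as the sublocus of $\mathcal{M}_{\mathcal{A}_0}$ on which the four doubles of $S$ become collinear; this is where the reducibility originates, since even an irreducible base moduli space $\mathcal{M}_{\mathcal{A}_0}$ can meet this collinearity locus in several pieces, or in a single conjugate pair of points. Concretely, normalize the $\PGL_3(\C)$-action by sending four suitably chosen points of the configuration to a standard projective frame; the remaining points and lines then depend on a tuple of affine coordinates, and $\mathcal{M}_\mathcal{A}$ is the locally closed subvariety of the resulting affine space cut out by the vanishing of the $3\times 3$ minors encoding the fourteen triple points (the ten original ones together with the four points of $S$) and the non-vanishing of the minors that would record any further, unwanted incidence. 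Since the parameter count is $2\cdot 11-8-14=0$, in the transverse cases $\mathcal{M}_\mathcal{A}$ is a finite reduced scheme, which I would compute over $\Q$ by Gröbner-basis elimination, obtaining a univariate polynomial whose factorization over $\Q$ exhibits the Galois orbits and tells me which realizations are real and which come in complex-conjugate pairs. In the non-transverse cases, where the fourteen concurrence equations are dependent and $\mathcal{M}_\mathcal{A}$ is positive-dimensional, I would instead compute a primary decomposition and determine, component by component, the dimension, whether the component is defined over $\R$, and whether the open combinatorial-type conditions keep it connected.

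Third, with the decomposition of each $\mathcal{M}_\mathcal{A}$ in hand, I apply the reducibility criterion: $\mathcal{M}_\mathcal{A}$ is reducible (has more than one connected component) exactly when the computed solution set breaks up, and $\mathcal{M}_\mathcal{A}^{\C}$ is reducible exactly when complex conjugation acts on the set of components with at least two orbits — equivalently, when the components do not form a single conjugation-invariant piece or a single conjugate pair, the latter being the MacLane phenomenon in which two conjugate realizations collapse to one point in the quotient. Running through all one hundred fifty-one arrangements and recording those that pass this test yields exactly the seventy-four listed in Table~\ref{tab:74cases}.

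The main obstacle I anticipate is the non-transverse cases: when the concurrence conditions are dependent, $\mathcal{M}_\mathcal{A}$ can be a reducible curve or surface, and I must both certify its irreducible components (primary decomposition over $\Q$, followed by a field-of-definition analysis) and verify that the Zariski-open conditions carving out the exact combinatorial type do not further disconnect any component — a connectivity question genuinely subtler than zero-dimensional root counting. Secondary difficulties are the completeness of the combinatorial-isomorphism search used to pass from one hundred eighty-eight down to one hundred fifty-one, and the bookkeeping across all the cases; a useful internal consistency check is that every arrangement containing the MacLane, Nazir--Yoshinaga, or Falk--Sturmfels arrangement as a subarrangement must occur among the reducible ones, so cross-referencing against these known exceptional subarrangements flags errors in the case analysis.
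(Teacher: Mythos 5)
Your plan is correct and is, in substance, the paper's own: enumerate the admissible four-point extension lines, identify isomorphic extensions, and then compute and decompose each moduli space case by case, reading off reducibility of $\mathcal{M}_\mathcal{A}$ and of $\mathcal{M}_\mathcal{A}^\C$. The differences are in the machinery. For the enumeration, the paper first quotients the admissible subsets by $\Aut((10_3)_j)$ and then uses Lemmas \ref{self-exchange} and \ref{general exchange} to restrict the remaining isomorphism checks to those extensions from which deleting some line returns a $(10_3)$ configuration; your exhaustive isomorphism search is heavier but equally valid---just note that the figure one hundred eighty-eight is the subtotal \emph{after} the automorphism-group quotient (Table \ref{tab:main4}), not the raw count of admissible $4$-subsets, so your bookkeeping sentence is off even though the final count of one hundred fifty-one would still come out of your search. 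For the algebra, the paper follows the parameterization of \cite{bokowski}, choosing a projective frame so that each moduli space is either finite or cut out by a single polynomial, and certifies (absolute) irreducibility via Newton polytopes and reduction modulo a prime (Theorem \ref{critc}, Lemma \ref{lem:zirred}, Theorem \ref{critcmodular}); your Gr\"obner-elimination and primary-decomposition route buys generality (no need to hunt for a one-equation parameterization) at the cost of heavier computation and a separate field-of-definition analysis, which the Newton-polytope criterion delivers almost for free. One caution on your final step: for positive-dimensional cases, reducibility of $\mathcal{M}_\mathcal{A}^\C$ is not decided by counting conjugation orbits of irreducible components alone, since two non-conjugate components meeting at a point that survives the non-degeneracy conditions lie in the same Euclidean-connected component---this is precisely the intersection-point check the paper performs; for the zero-dimensional cases, which account for nearly all of the seventy-four arrangements in Table \ref{tab:74cases}, your orbit count is equivalent to the paper's criterion $\lvert\mathcal{M}_\mathcal{A}^\C\rvert\geq 2$.
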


\begin{theorem}
\label{thm:five}
Out of the twenty-three distinct arrangements obtained by adding an eleventh line through five double points in one of the ten $(10_3)$ configurations, just one of them has a reducible moduli space modulo complex conjugation:  $(10_3)_1.AEIKO$ as discussed in Example \ref{ex:AEIKO}.
\end{theorem}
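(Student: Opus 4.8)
The plan is to proceed by explicit enumeration followed by case-by-case analysis of moduli spaces, in parallel with the strategy used for Theorems \ref{thm:main} and \ref{thm:four}. First I would establish the census: starting from each of the ten $(10_3)$ configurations, I would enumerate all sets of five double points that are collinear in \emph{some} realization — equivalently, all combinatorially valid choices of an eleventh line meeting exactly five of the existing double points — and then identify arrangements that arise from more than one $(10_3)$ configuration (or more than one choice of line) to reduce to the twenty-three distinct combinatorial types. Because the number here is small, this enumeration can be displayed in full (this is Table \ref{tab:main5}), and each of the twenty-three types can be named in the $(10_3)_i.XYZWV$ notation already in use.

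Next, for each of the twenty-three arrangements I would compute the moduli space $\mathcal{M}_\mathcal{A}$ by setting up defining equations: fix a projective frame using four lines in general position, solve the incidence constraints imposed by all triple points and the new line's five double-point incidences, and reduce to a system in the remaining free parameters. For most of the twenty-three I expect the resulting variety to be irreducible — often a single point, a rational curve, or an irreducible surface — and for those the claim is immediate. The main work is isolating the arrangements whose ideal factors, determining the components, and then checking whether the components are exchanged by complex conjugation; an arrangement contributes to the count in this theorem only if, after the quotient $\mathcal{M}_\mathcal{A}^{\mathbb C}$, there remain at least two components. Here the results of Fan \cite{fan}, Nazir–Yoshinaga \cite{ny}, and the exceptional/reductive reductions quoted in the Introduction can be used to shortcut many cases: any arrangement that is reductive, or whose relevant subarrangement has irreducible moduli space, can be discarded immediately via \cite[Lemma 3.2]{ny}.

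The key step is then to show that exactly one arrangement survives, namely $(10_3)_1.AEIKO$, and that its moduli space modulo complex conjugation is genuinely reducible. For this I would carry out the coordinatization of $(10_3)_1.AEIKO$ explicitly in Example \ref{ex:AEIKO}: write down the realization equations, exhibit the factorization of the resulting polynomial system into (at least) two irreducible components defined over $\mathbb{R}$ or genuinely non-conjugate over $\mathbb{C}$, and verify that no automorphism of the arrangement and no complex conjugation identifies them — so the two components persist in $\mathcal{M}_\mathcal{A}^{\mathbb C}$. Dually, for the other twenty-two arrangements I must certify \emph{irreducibility} (or reducibility that collapses under conjugation); the cleanest route is to exhibit, for each, either a reductive line, or an explicit rational parametrization of $\mathcal{M}_\mathcal{A}$, or a Gröbner-basis computation showing the defining ideal is prime over $\mathbb{Q}$.

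The hard part will be the certification of irreducibility for the borderline cases — arrangements that are non-reductive and whose moduli space is a curve or surface whose defining equations do not obviously factor. For those I expect to need a genuine primality check of the defining ideal (e.g.\ via elimination and a primary decomposition computation), and care is required because a factorization over $\mathbb{Q}(i)$ but not over $\mathbb{Q}$ would signal precisely the conjugation-swapped pair that this theorem is designed to detect; conversely an apparent factorization that turns out to be spurious (a component lying in the discriminant, where lines coincide or the arrangement degenerates) must be discarded. Keeping the bookkeeping of which components are "honest" realizations versus degenerate loci, consistently across all twenty-three arrangements, is where the bulk of the effort lies, but the final tally should yield the single surviving example $(10_3)_1.AEIKO$ claimed in the statement.
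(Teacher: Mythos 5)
Your overall strategy is the paper's: enumerate the extensions combinatorially up to isomorphism (automorphism groups plus the cross-identifications of Lemmas \ref{self-exchange} and \ref{general exchange}), coordinatize each arrangement, and decide irreducibility and the conjugation action case by case, arriving at the tally of Table \ref{tab:main5}. The genuine gap is in your certification step for the twenty-two arrangements other than $(10_3)_1.AEIKO$. Showing that the defining ideal is prime over $\Q$ does \emph{not} show that $\mathcal{M}_\mathcal{A}$ is irreducible: the moduli space is a complex variety, so what is needed is absolute irreducibility, and the one exceptional arrangement in this very theorem shows how the $\Q$-test fails. The constraint for $(10_3)_1.AEIKO$ is a $\Q$-irreducible quadratic relation with roots $a=\tfrac{3\pm\sqrt{5}}{2}\,b$; a G\"obner/primality check over $\Q$ would declare it irreducible, yet over $\C$ it splits into two Galois-conjugate components, which is exactly the phenomenon being counted. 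This is why the paper runs every case through Gao's Newton-polytope criterion (Theorem \ref{critc}) combined with $\Q$-irreducibility via reduction modulo a prime and Gauss's lemma (Lemma \ref{lem:zirred}), with Theorem \ref{critcmodular} for the stubborn cases. Your conjugation bookkeeping is also backwards: a pair of components defined over $\Q(i)$ and swapped by complex conjugation is precisely the case that does \emph{not} contribute here, since it becomes irreducible in $\mathcal{M}_\mathcal{A}^\C$ (that is the separate row of the tables); what survives the quotient is a pair of components conjugate over a \emph{real} field, as with $\Q(\sqrt{5})$ for $(10_3)_1.AEIKO$, where conjugation fixes each component and $\mathcal{M}_\mathcal{A}^\C$ consists of two points.

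A secondary slip concerns the census. Your asserted equivalence between ``five double points collinear in some realization'' and ``combinatorially valid choice of an eleventh line'' is false, and in this family drastically so: twenty-one of the twenty-three combinatorial types have empty moduli space, i.e.\ are not realizable at all. The count of twenty-three is purely combinatorial (five double points, no two on a common existing line, taken up to $\Aut$ and the exchange identifications); realizability is decided only afterwards, when the incidence system is solved. Consequently the bulk of the casework is verifying that the polynomial systems are inconsistent, not delicate primality or degeneracy bookkeeping, and no quotient by $\Aut(\mathcal{A})$ enters, since the theorem concerns the ordered moduli space modulo complex conjugation only.
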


{\renewcommand{\arraystretch}{1.2}\begin{center}
\begin{table}[!htb]
	\scalebox{1}{
		\begin{tabular}{|l|c|c|c|c|c|c|c|c|c|c||c|c|}
			\hline
			\hspace{5mm} $j$ & 1 & 2 & 3 & 4 & 5 & 6 & 7 & 8 & 9 & 10 & Subtotal & Total \\ \hline
			$\#$ arrangements constructed from $(10_3)_j$ & 4 & 17 & 42 & 11 & 76 & 30 & 50 & 50 & 39 & 17 & 336 & 321 \\ \hline \hline
			$\#$ with irreducible, non-empty moduli space & 3 & 12 & 34 & 0 &73 & 25 & 43 & 48 & 36 & 17 & 291 & 282 \\ \hline
			$\#$ with empty moduli space & 1 & 5 & 7 & 11 &3 &4 & 6 & 2 &3 &0 &42 & 36 \\ \hline
			$\#$ with reducible $\mathcal{M}_\mathcal{A}$ but irreducible $\mathcal{M}_\mathcal{A}^\C$ &0&0&1&0&0&1&0&0&0&0&2&2\\ \hline
			$\#$ with reducible $\mathcal{M}_\mathcal{A}^\C$ & 0 & 0 &  0 & 0 & 0 &0 & \textbf{1} &0 & 0 &0 & \textbf{1} & \textbf{1} \\ 
\hline
		\end{tabular}
	}
	\label{tab:main3}
	\caption{Classification of the moduli space of arrangements obtained by adding a line through \emph{three} double points of a $(10_3)$ configuration}\end{table}
\end{center}
}

{\renewcommand{\arraystretch}{1.2}
\begin{table}[!htbp]
	\begin{center}
		\scalebox{1}{
			\begin{tabular}{|l|c|c|c|c|c|c|c|c|c|c||c|c|}
				\hline
				\hspace{5mm} $j$ 
				& 1 & 2 & 3 & 4 & 5 & 6 & 7 & 8 & 9 & 10 & Subtotal & Total \\ \hline
				$\#$ arrangements constructed from $(10_3)_j$ 
				& 2 & 8 & 21 & 5 & 45 & 16 & 25 & 30 & 24 & 12 & 188 & 151 \\ \hline \hline
			$\#$ with irreducible, non-empty moduli space 					
				& 1 & 2 & 3 & 0 & 0 & 2 & 3 & 1 & 0 & 1 & 13 & 10 \\ \hline
			$\#$ with empty moduli space 												
				& 0 & 4 & 8 & 5 & 13 & 8 & 8 & 9 & 11 & 1 & 67 & 50 \\ \hline
			$\#$ with reducible $\mathcal{M}_\mathcal{A}$ but irreducible $\mathcal{M}_\mathcal{A}^\C$						
				& 0 & 1 & 4 & 0 & 5 & 2 & 1 & 4 & 2 & 3 & 22 & 17 \\ \hline
			$\#$  with reducible $\mathcal{M}_\mathcal{A}^\C$ 									& \textbf{1} & \textbf{1} & \textbf{6} & 0 & \textbf{27} & \textbf{4} & \textbf{13} & \textbf{16} & \textbf{11} & \textbf{7} & \textbf{86} & \textbf{74} \\ \hline
			\end{tabular}
		}
	\end{center}
	\caption{Classification of the moduli space of arrangements obtained by adding a line through \emph{four} double points of a $(10_3)$ configuration}
\label{tab:main4}
\end{table}
}

{\renewcommand{\arraystretch}{1.2}
\begin{table}[!htbp]
	\begin{center}
		\scalebox{1}{
			\begin{tabular}{|l|c|c|c|c|c|c|c|c|c|c||c|}
				\hline
				\hspace{5mm} $j$ & 1 & 2 & 3 & 4 & 5 & 6 & 7 & 8 & 9 & 10 & Total \\ \hline
				$\#$ arrangements constructed from $(10_3)_j$& 1 & 1 & 2 & 1 & 5 & 2 & 2 & 3 & 3 & 3 & 23 \\ \hline \hline
				$\#$ with irreducible, non-empty moduli space & 0 & 0 & 0 & 0 & 0 & 0 & 0 & 0 & 0 & 1 & 1\\ \hline
				$\#$ with empty moduli space & 0 & 1 & 2 & 1 & 5 &2 &2 &3 &3 &2 & 21\\ \hline
				$\#$ with reducible $\mathcal{M}_\mathcal{A}$ but irreducible $\mathcal{M}_\mathcal{A}^\C$	 & 0 & 0 & 0 & 0 & 0 & 0 & 0 & 0 & 0 & 0 & 0 \\ \hline
				$\#$ with reducible $\mathcal{M}_\mathcal{A}^\C$ 	 & \textbf{1} & 0 & 0 & 0 & 0 & 0 & 0 & 0 & 0 & 0 & \textbf{1}\\ \hline
			\end{tabular}
		}
	\end{center}
	\caption{Classification of the moduli space of arrangements obtained by adding a line through \emph{five} double points of a $(10_3)$ configuration}
	\label{tab:main5}
\end{table}
}

\begin{remark}
	All of the $(10_3)$ configurations have a 2-dimensional moduli space, except for $(10_3)_1$ and $(10_3)_4$. The configuration $(10_3)_1$ has a 3-dimensional moduli space, and $(10_3)_4$ has an empty moduli space. Introducing a new line through 4 double points gives 2 restraints on the moduli space. If neither of these restraints are already present, then the moduli space is 0-dimensional or empty, and a 0-dimensional moduli space is reducible as long as it contains more than one point. This is empirically why one-line extensions of $(10_3)$ configurations through 4 double points yield a high proportion of reducible moduli spaces.
\end{remark}

\begin{remark}
Our example $(10_3)_6.AFIO$ appears as $C_{28}$ in a recent work by Guerville-Ball\'e \cite{GB29}; it is the only such overlap.
\end{remark}

\begin{example}
\label{ex:ADO}
Consider the $(10_3)$ configuration $(10_3)_7$ with an eleventh line passing through the intersections $L_1\cap L_5$, $L_2\cap L_4$, and $L_9\cap L_{10}$.  We call this arrangement $(10_3)_7.ADO$. Its arrangement table is given in Table \ref{tab:ADO}, and two of its geometric realizations from the two different irreducible components of its moduli space are given in Figure \ref{fig:ADO}.

	\begin{table}[!htp]
		\begin{tabular}{ccccccccccc}
			$L_1$ & $L_2$ & $L_3$ & $L_4$ & $L_5$ & $L_6$ & $L_7$ & $L_8$ & $L_9$ & $L_{10}$ & $L_{11}$\\ \hline
			1&1&1&2&4&6&5&3&7&2&$A$\\
			2&4&6&8&8&9&7&5&3&4&$D$\\
			3&5&7&9&0&0&8&9&0&6&$O$\\
			$A$&$D$& &$D$&$A$& & & &$O$&$O$&
		\end{tabular}
	\caption{The arrangement table for $(10_3)_7.ADO$, which has a Galois conjugate moduli space}
	\label{tab:ADO}
	\end{table}

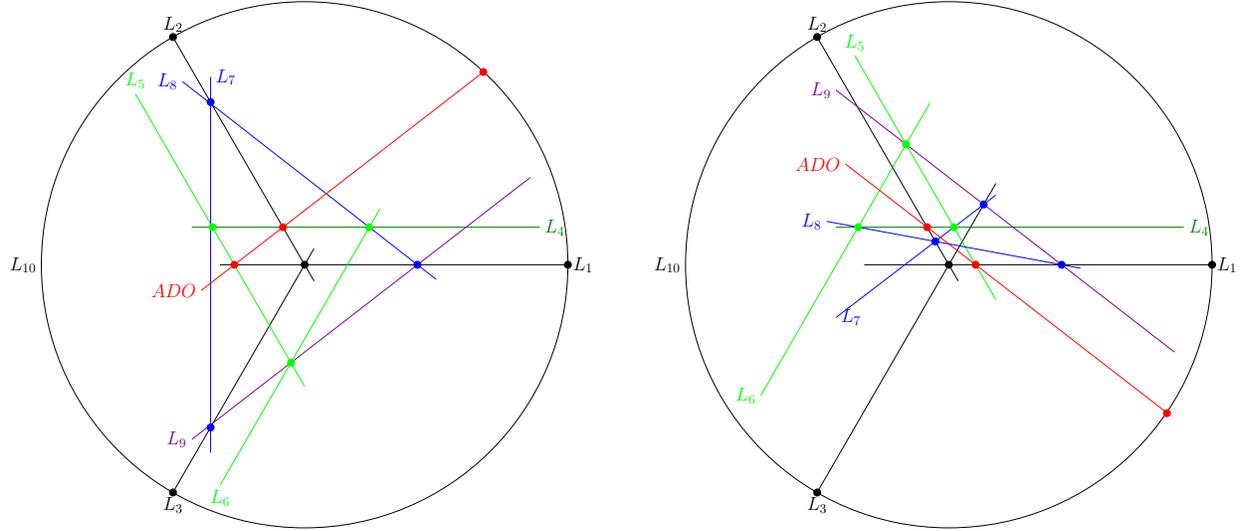
\begin{figure}[!htb]
\begin{center}
	{\large
		\scalebox{.5}{
			\begin{tikzpicture}[domain=-5:5,scale=1]

			\draw (0,0) circle (7cm);
			\draw (-7,0) node[left] {\LARGE $L_{10}$};
			\draw[domain=-2.25:7] plot (\x, 0) node[right]{\LARGE $L_1$};
			\draw[domain=.25:-3.5] plot (\x, -1.73* \x) node[above]{\LARGE $L_2$};
			\draw[domain=.25:-3.5] plot (\x,  1.73*\x) node[below]{\LARGE $L_3$};

			\fill[color=black] (0,0) circle (3pt);
			\fill[color=black] (7,0) circle (3pt);
			\fill[color=black] (-3.5,6.06) circle (3pt);
			\fill[color=black] (-3.5,-6.06) circle (3pt);

			\draw[color=darkgreen, domain=-3:6.25] plot (\x, 1) node[right]{\LARGE $L_4$};
			\draw[color=green, domain=0:-4.5] plot (\x, -1.73* \x-3.23607) node[above]{\LARGE $L_5$};
			\draw[color=green, domain=2:-2.25] plot (\x,  1.73*\x-1.97533) node[below]{\LARGE $L_6$};

			\draw[color=blue, domain=-5:5] plot (-2.5, \x) node[right]{\LARGE $L_7$};
			\draw[color=blue, domain=3.5:-3.25] plot (\x, -0.779914* \x+2.33974) node[left]{\LARGE $L_8$};
			\draw[color=purple, domain=6:-3] plot (\x,  0.774597*\x-2.32379) node[left]{\LARGE $L_9$};

			\draw[color=red, domain=4.75:-2.75] plot (\x,  0.774597 *\x+1.44721) node[left]{\LARGE $ADO$};

			\fill[color=blue] (3,0) circle (3pt);
			\fill[color=blue] (-2.5,4.33) circle (3pt);
			\fill[color=blue] (-2.5,-4.33) circle (3pt);

			\fill[color=green] (1.71781, 1) circle (3pt);
			\fill[color=green] (-2.44569, 1) circle (3pt);
			\fill[color=green] (-0.363943, -2.6057) circle (3pt);

			\fill[color=red] (-.57735,1) circle (3pt);
			\fill[color=red] (-1.86834, 0) circle (3pt);

			\fill[color=red] (4.75, 5.13) circle (3pt);
			
			\end{tikzpicture}}
		\hspace{.5cm}
		\scalebox{.5}{
			\begin{tikzpicture}[domain=-5:5,scale=1]

			\draw (0,0) circle (7cm);
			\draw (-7,0) node[left] {\LARGE $L_{10}$};
			\draw[domain=-2.25:7] plot (\x, 0) node[right]{\LARGE $L_1$};
			\draw[domain=.25:-3.5] plot (\x, -1.73* \x) node[above]{\LARGE $L_2$};
			\draw[domain=1.25:-3.5] plot (\x,  1.73*\x) node[below]{\LARGE $L_3$};

			\fill[color=black] (0,0) circle (3pt);
			\fill[color=black] (7,0) circle (3pt);
			\fill[color=black] (-3.5,6.06) circle (3pt);
			\fill[color=black] (-3.5,-6.06) circle (3pt);

			\draw[color=darkgreen, domain=-3:6.25] plot (\x, 1) node[right]{\LARGE $L_4$};
			\draw[color=green, domain=1.25:-2.5] plot (\x, -1.73* \x+1.23607) node[above]{\LARGE $L_5$};
			\draw[color=green, domain=-.5:-5] plot (\x,  1.73*\x+5.17148) node[left]{\LARGE $L_6$};

			\draw[color=blue, domain=1.25:-3] plot (\x, 0.765974*\x+0.895602) node[right]{\LARGE $L_7$};
			\draw[color=blue, domain=3.5:-3.25] plot (\x, -0.184897* \x+0.554692) node[left]{\LARGE $L_8$};
			\draw[color=purple, domain=6:-3] plot (\x,  -0.774597*\x+2.32379) node[left]{\LARGE $L_9$};

			\draw[color=red, domain=5.8:-2.75] plot (\x,  -0.774597 *\x+0.552786) node[left]{\LARGE $ADO$};

			\fill[color=blue] (3,0) circle (3pt);
			\fill[color=blue] (-0.358524, 0.620982) circle (3pt);
			\fill[color=blue] (0.927051, 1.6057) circle (3pt);

			\fill[color=green] (-2.40841, 1) circle (3pt);
			\fill[color=green] (0.136294, 1) circle (3pt);
			\fill[color=green] (-1.13606, 3.20378) circle (3pt);

			\fill[color=red] (-0.57735, 1) circle (3pt);
			\fill[color=red] (0.713644, 0) circle (3pt);

			\fill[color=red] (5.8, -3.95) circle (3pt);

			\end{tikzpicture}}
			}
		\caption{Arrangement $(10_3)_7.ADO$ with geometric realizations from the two different irreducible components of its Galois conjugate moduli space. The realization on the left corresponds to the values of $a = 3$ and $c = \frac{3}{4}(1+\sqrt{5})$, and the realization on the right corresponds to the values $a = 3$ and $c = \frac{3}{4}(1-\sqrt{5})$}  
		\label{fig:ADO}
		\end{center}
	
\end{figure}

The geometric realizations of $(10_3)_7.ADO$ in $\mathbb{CP}^2$ with coordinates $[x:y:z]$, up to a projective transformation, can be described by the equation
\begin{align}
\begin{split}
(y)(\sqrt{3}x+y)(\sqrt{3}x-y)(y-z)(\sqrt{3}(b+c)x+(b+c)y+(2\sqrt{3}bc-c)z) & \\
(\sqrt{3}bx-by+(a+2b+\sqrt{3}ab)z) (\sqrt{3}(b+c)x-(c-b)y+2\sqrt{3}bcz) (-\sqrt{3}bx-(a+b)y+\sqrt{3}abz)  & \\
(\sqrt{3}cx-(a+c)y-\sqrt{3}acz) (z) ( \sqrt{3}(b + c) x - (2 \sqrt{3} bc - b - 3 c)y + (2\sqrt{3}bc - 2 c)z) 
& = 0,
\end{split}
\end{align}
where $a$ is a complex number with a finite number of exceptions, $b=-\tfrac{4c^2 + a c}{a + 2 c - 2 \sqrt{3} c^2}$, and $c^\pm=\tfrac{a}{4}(1 \pm \sqrt{5})$. This shows that we have two irreducible components in the moduli space: one corresponding to $\tfrac{a}{4}(1 + \sqrt{5})$ and another corresponding to $\tfrac{a}{4}(1 - \sqrt{5})$. Since we have one free parameter, each of the irreducible components is one-dimensional.

\end{example}

\begin{example}
	\label{ex:AEIKO}
	The arrangement $\mathcal{A} = (10_3)_1.AEIKO$ has arrangement table given in Table \ref{tab:AEIKO}. Its moduli space can be parameterized by $a$ and $b$ satisfying $a^\pm=(\frac{3\pm\sqrt{5}}{2})b$, where $b$ is a complex number with a finite number of exceptions.
	We also know that $\Aut((10_3)_1.AEIKO) \cong F_{20} \cong \langle (12345),(1243) \rangle$, the Frobenius group of order 20. 
	
	\begin{table}[!htp]
		\begin{tabular}{ccccccccccc}
			$L_1$ & $L_2$ & $L_3$ & $L_4$ & $L_5$ & $L_6$ & $L_7$ & $L_8$ & $L_9$ & $L_{10}$ & $L_{11}$\\ \hline
			1		&1	&1	&8	&2	&3	&2	&3	&4	&5	&$A$\\
			2		&4	&6	&9	&4	&5	&6	&7	&6	&7	&$E$\\
			3		&5	&7	&0	&8	&8	&9	&9	&0	&0	&$I$\\
			$A$	&$E$&$I$&$A$&$K$&$I$&$E$&$O$&$O$&$K$&$K$\\
			& 	& 	& 	& 	& 	& 	& 	& 	& 	&$O$\\
		\end{tabular}
		\caption{The arrangement table for $(10_3)_1.AEIKO$,  whose moduli space is two Galois conjugate points}
		\label{tab:AEIKO}
\end{table}

\end{example}

\medskip

\textbf{Organization. } Section \ref{sect:background} describes in more detail the objects with which we work, including the moduli space of a combinatorial line arrangement.

Section \ref{sect:construction} details the one-line extension construction. 

Section \ref{sect:extensionsof103} applies the one-line extension construction to all ten $(10_3)$ configurations. We calculate that, up to isomorphism, there are three hundred twenty-one one-line extensions of $(10_3)$ configurations through three double points, one hundred fifty-one one-line extensions of $(10_3)$ configurations through four double points, and twenty-three one-line extensions of $(10_3)$ configurations through five double points.

Section \ref{sect:moduli} discusses how the moduli space is calculated and the algebraic techniques to determine its irreducibility. We conclude that there are seventy-six one-line extensions of $(10_3)$ configurations with a reducible moduli space up to complex conjugation. 

Section \ref{sec:corrections} offers minor corrections 
to the work by the first author with Amram, Teicher, and Ye \cite{amram}.

\medskip

\textbf{Acknowledgements. } The authors would like to thank the Undergraduate Research Summer Institute at Vassar College, an in-house research experience for undergraduates, for their funding of a portion of this research while the second author was a rising senior at Vassar during the summer of 2017.  

\begin{table}[h!]
	\scalebox{.9}{
\begin{tabular}{|lccc|}
\hline
Arrangement	$\mathcal{A}$			& $\lvert \mathcal{M}_\mathcal{A} \rvert$ & $\lvert \mathcal{M}_\mathcal{A}^\mathbb{C}\rvert$& $\Aut(\mathcal{A})$	\\
\hline
1.AEIK				&	$\infty^1$	&$\infty^1$	& $\Z/4\Z$ \\
\hline
2.AENO  $\cong$ 7.ADLO& 2 &2  &  \\
\hline
3.BDHL	$\cong$ 9.BDIM	& 3 &2  & \\
3.BDIK	$\cong$ 6.AENO	& 3 &2  & \\
3.BDIL						& 3 &2  & \\
3.BDKL	$\cong$ 9.BDMN	& 3 &3  & \\
3.BFJM						& 3 &2  & \\
3.DHLO						& 3  &2 & \\
\hline
5.AEIK						& 5 &3  & \\
5.AEJO						& 3 &2  & \\
5.AEKN						& 5 &3  & \\
5.AENO						& 4 &3  & \\
5.AFIK						& 4 &2  & \\
5.AFIO						& 4 &3  & $\Z/2\Z$ \\
5.AFKL						& 4 &3  & \\
5.AFLO						& 4 &4  & \\
5.BDHL	$\cong$ 5.BFIK	& 4 &2  & \\
5.BDHN  $\cong$ 7.AIJL	& 3&2	& \\
5.BDIK	$\cong$ 5.BDKL	$\cong$ 10.AEIJ	& 6 &3  & \\
5.BDJL	$\cong$ 5.BEIK	& 3 &2  & \\
5.BDKN	$\cong$ 10.ADKM	& 5 &3  & \\
5.BEGJ						& 5 &3  & \\		
5.BEGK						& 5 &4  & \\		
5.BEGN						& 4 &2  & \\		
5.BEKN						& 4&2  & \\
5.BFGK						& 4&3   & \\
5.BFKL						& 3 &2  & \\
5.BGJL						& 3 &2  & \\
5.BGKL						& 5 &3  & \\
5.BGKN						& 4 &3  & \\
5.DHLO						& 3 &3  & \\
5.DHMN  $\cong$ 7.AILO	&3&3	& \\
5.DHNO  $\cong$ 7.AFIL	& 2 & 2	& \\
5.DIKM						& 4 & 2   & \\
5.DKMN						& 2  & 2 & $\Z/2\Z$ \\
 & & & \\
 & & & \\
 & & & \\
	\footnotesize{\hspace{1em}$^1$ Two 1-dimensional components} & & & \\
\hline
\end{tabular}
\hspace{5mm}
\begin{tabular}{|lccc|}
\hline
Arrangement	$\mathcal{A}$			& $\lvert \mathcal{M}_\mathcal{A} \rvert$ & $\lvert \mathcal{M}_\mathcal{A}^\mathbb{C}\rvert$& $\Aut(\mathcal{A})$	\\
\hline
6.AEHO						& 4  &2 & $\Z/2\Z$ \\
6.AFIJ						& 3 &2  & \\
6.AFIO						& 4 &2  & $\Z/2\Z$ \\
\hline
7.ADIL						& 3&2   & \\
7.ADIM	$\cong$ 7.AFIM	& 2  &2 & \\
7.AEIJ						& 3&2   & \\
7.AEIM						& 2&2  & $\Z/2\Z$ \\
7.AEIO						& 2&2   &  \\
7.AEJM						& 2&2   & $\Z/2\Z$ \\
7.AEJN						& 2&2   & \\
7.AIJM						& 4&2   & \\
7.BIJM						& 3&2   & \\
\hline
8.ADIM						& 4&3   & \\
8.AEGM						& 4&2   & \\
8.AEIM						& 7&4   & \\
8.AFIJ	$\cong$ 9.ADIM	& 5  &3 & \\
8.AFIL						& 5&3   & \\
8.AFJO						& 5&3   & \\
8.AIJM						& 5&3   & \\
8.AILM						& 5&4   & \\
8.BDIM						& 4 &2  & \\
8.BFIJ						& 5&3   & \\
8.BFIK	$\cong$ 9.ADLN	& 5 &4  & \\
8.BFJO						& 4&2   & \\
8.BFKO						& 3&2   & \\
8.BIJM						& 4&2   & \\
8.BIKM						& 6&4   & \\
8.CFIJ	$\cong$ 9.AEIM	& 4 &3  & $\Z/2\Z$ \\
\hline
9.ADIL						& 5 & 3   & \\
9.ADMN						& 5 &3  & \\
9.AEGM						& 3&2   & \\	
9.BDHM						& 3&2   &  \\
9.BDHO						& 3&2   &  \\
9.BGMN						& 3&2   & \\
\hline
10.ADHM						& 4&3   & \\
10.ADHN						& 4&3   & $\Z/2\Z$ \\
10.ADIN						& 5&4   & \\
10.AEGM						& 6&4  & \\
10.AEGN						& 2&2   & $\Z/2\Z$ \\
\hline
\end{tabular}
}
\caption{ The list of 74 arrangements with reducible moduli space modulo complex conjugation obtained from one-line extensions of $(10_3)$ arrangements appearing in Theorem \ref{thm:four}, whose automorphism groups are trivial unless otherwise noted}
\label{tab:74cases}
\end{table}

\section{Background}\label{sect:background}
	
We can describe a combinatorial line arrangement as a collection of ``points" and ``lines" along with incidence relations between them. The following definition has been adapted from the definition of \textit{combinatorial configuration} in Gr\"unbaum's textbook \emph{Configurations of Points and Lines} \cite{grunbaum}. 
	
	\begin{definition}
		A \textbf{combinatorial line arrangement} $\mathcal{A} = (\mathcal{P}, \mathcal{L})$ consists of a finite set of \textbf{points} $\mathcal{P}$ and a finite set of \textbf{lines} $\mathcal{L}$, which are subsets of $\mathcal{P}$. We also require that the intersection of two lines is at most one point. Our convention in constructing non-reductive arrangements also requires that each point in $\mathcal{P}$ appears in at least three elements of $\mathcal{L}$. 
If a point $P\in \mathcal{P}$ is on the line $L\in \mathcal{L}$, we say that $P$ is \textbf{incident to} $L$ or that $L$ is \textbf{incident to} $P$.

We refer to the intersection of exactly two lines as a \textbf{double point}, and we define the \textbf{set of double points of $\mathcal{A}$} to be
	\[
		\Doubles(\mathcal{A}) = \left\{\{L, L'\} \in \binom{\mathcal{L}}{2} \mid L \cap L'  = \emptyset \right\}.
	\]
The reason why these 2-tuples are called double points is that we will be attempting to realize these arrangements in projective space, where all pairs of lines intersect exactly once. Using our convention that each point in $\mathcal{P}$ appears in at least three elements of $\mathcal{L}$, we see that if $L \cap L' = \emptyset$, then the intersection of $L$ and $L'$, in the realization in projective space, is not a point that appears in at least three elements of $\mathcal{L}$, so this intersection shall be named a double point since $L$ and $L'$ are the only two lines passing through this intersection.

On the same note, We define \textbf{points of higher multiplicity} as the elements of $\mathcal{P}$. A \textbf{triple point} is an element in $\mathcal{P}$ that appears in exactly three elements of $\mathcal{L}$.

		These arrangements can be presented in an \textbf{arrangement table}, in which the headers are the names of the lines and the columns contain the names of the points incident to each line.

	\end{definition}

	\begin{example}
\label{ex:fano}
		The well-known Fano arrangement is a combinatorial line arrangement whose arrangement table can be found in Table \ref{tab:fano}.

        \begin{table}[h!]
			\begin{tabular}{c c c c c c c}
				$L_1$ & $L_2$ & $L_3$ & $L_4$ & $L_5$ & $L_6$ & $L_7$ \\ \hline
				$P_1$ & $P_1$ & $P_1$ & $P_2$ & $P_2$ & $P_3$ & $P_3$ \\
				$P_2$ & $P_4$ & $P_6$ & $P_4$ & $P_5$ & $P_4$ & $P_5$ \\
				$P_3$ & $P_5$ & $P_7$ & $P_6$ & $P_7$ & $P_7$ & $P_6$
			\end{tabular}
    \caption{An arrangement table for the Fano arrangement}
	\label{tab:fano}
        \end{table}
		
		We see that the arrangement table presentation of the Fano arrangement corresponds to the usual geometric presentation given in Figure \ref{fig:fano}.
		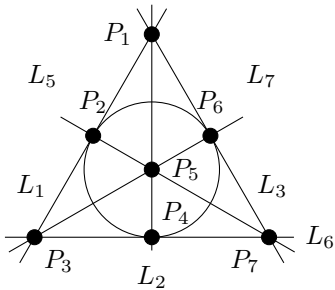
\begin{figure}[!hbtp]
			\begin{center}
				\begin{tikzpicture}
				[every node/.style={draw,circle,inner sep=2pt,fill=black}, scale=0.6]
				\node [label = right:$P_5$] at (0,0) {};
				
				\node [label = left:$P_1$] at (360/3-30: 3cm) (1) {};
				\node [label = below right:$P_3$] at (2*360/3-30: 3cm) (2) {};
				\node [label = below left:$P_7$] at (3*360/3-30: 3cm) (3) {};

				\draw [name path=l1, shorten <=-.5cm, shorten >=-.5cm] (1)--node [white, very near end, above left = .04cm, text=black] {$L_1$} coordinate[pos = .5](m1)(2);
				\draw [name path=l2, shorten <=-.5cm, shorten >=-.5cm] (1)--node [white, very near end, above right = .04cm, text=black] {$L_3$} coordinate[pos = .5]coordinate[pos = .5](m2)(3);
				\draw [name path=l3, shorten <=-.5cm, shorten >=-.5cm] (2)--node [white, very near end, right = .8cm, text=black] {$L_6$}coordinate[pos = .5](m3)(3);
				\draw [name path=l4, shorten <=-.5cm, shorten >=-.5cm] (m3)--node [white, very near start, below = .5cm, text=black] {$L_2$}(1);
				\draw [name path=l5, shorten <=-.5cm, shorten >=-.5cm] (m2)--node [white, very near start, above right = 1cm, text=black] {$L_7$}(2);
				\draw [name path=l6, shorten <=-.5cm, shorten >=-.5cm] (m1)--node [white, very near start, above left = 1cm, text=black] {$L_5$}(3);
				
				\node [label = above:$P_2$] at (m1) {};
				\node [label = above:$P_6$] at (m2) {};
				\node [label = above right:$P_4$] at (m3) {};
				
				\draw (0,0) circle (1.5cm);
				\end{tikzpicture}
			\end{center}
			\caption{The Fano arrangement with the middle circle as the combinatorial ``line'' $L_4$}
			\label{fig:fano}
		\end{figure}
	\end{example}

\subsection{Moduli space of a combinatorial line arrangement}
\label{subsec:MA}

 With this in mind, we are able to manipulate these combinatorial line arrangements by adding or removing a line.

\begin{definition}
	Let $\mathcal{A} = (\mathcal{P}, \mathcal{L})$ be a combinatorial line arrangement with $\mathcal{L} = \{L_1, L_2, \dots, L_n\}$, and let $L \subseteq \Doubles(\mathcal{A})$ be a subset of the set of double points. We define the \textbf{one-line extension of $\mathcal{A}$ by the line $L$}, written $\mathcal{A}\cup L$, to be 
the line arrangement $(\mathcal{P} \cup L, \mathcal{L}')$, where $\mathcal{L}' = \{L_1', L_2', \dots, L_n', L\}$ and $L_i' = L_i \cup D$ if there exists $D \in L$ such that $L_i \in D$ and $L_i' = L_i$ otherwise. 
\end{definition}
Intuitively, this is adding the line $L$ through its specified points and attaching double points in $\mathcal{A}$ that have now turned into triple points to the appropriate lines.

A similar construction involves removing a line from an arrangement.
\begin{definition}
	Let $\mathcal{A} = (\mathcal{P}, \mathcal{L})$ be a combinatorial line arrangement and $L \in \mathcal{L}$. We define \textbf{$\mathcal{A}$ minus $L$}, written $\mathcal{A} \setminus L$, to be 
	\[
		\mathcal{A} \setminus L = (\mathcal{P}, \mathcal{L} \setminus \{L\})
	\]
	with the convention that double points are omitted.
\end{definition}
	Intuitively, this is removing the line $L$ from $\mathcal{A}$ while removing triple points that are now double points from $\mathcal{P}$ and the corresponding lines.

The following notion of an isomorphism for combinatorial line arrangements is used to identify those we deem to have the same combinatorial information.

\begin{definition}
	Let $\mathcal{A} = (\mathcal{P}, \mathcal{L})$ and $\mathcal{A}' = (\mathcal{P}', \mathcal{L}')$ be two combinatorial line arrangements. We say that $\mathcal{A}$ and $\mathcal{A}'$ are \textbf{isomorphic}, denoted by $\mathcal{A}\cong\mathcal{A}'$, if there exists function $\varphi:\mathcal{P} \cup \mathcal{L} \to \mathcal{P}' \cup \mathcal{L}'$ such that $\varphi|_\mathcal{P}: \mathcal{P} \to \mathcal{P}'$ and $\varphi|_\mathcal{L}: \mathcal{L} \to \mathcal{L}'$ are both bijections. Additionally, for all $L \in \mathcal{L}$ and $P, Q \in L$, we have that $\varphi(P), \varphi(Q) \in \varphi(L)$. Also, if $L, L' \in \mathcal{L}$ are such that $L \cap L' = \{P\}$ for some point $P \in \mathcal{P}$, then $\varphi(L) \cap \varphi(L') = \{ \varphi(P) \}$. Such a function $\varphi$ is called an \textbf{isomorphism}. Also denote by $\Aut(\mathcal{A}) = \{\varphi:\mathcal{A}\to \mathcal{A} \mid \text{$\varphi$ is an isomorphism}  \}$ the \textbf{automorphism group} of $\mathcal{A}$.
\end{definition}

\begin{example}
\label{ex:fanoAut}
It is known 
that the automorphism group for the Fano arrangement is $\GL(3, \mathbb{F}_2)$. 
\end{example}

	The main tool we use to analyze combinatorial line arrangements is the moduli space. The moduli space $\mathcal{M}_\mathcal{A}$ is the space of all geometric realizations of the combinatorial line arrangement $\mathcal{A}$ in $\mathbb{CP}^2$. In order to define the moduli space, we need to define a geometric realization for a combinatorial line arrangement.
	
	\begin{definition}
		Let $\mathcal{A}$ be a combinatorial line arrangement with lines $L_1, L_2, \ldots, L_n$. A \textbf{geometric realization of $\bf \mathcal{A}$ in} $ \mathbb{CP}^2$ is a collection of lines $\ell_1, \ell_2, \ldots, \ell_n$ in $\mathbb{CP}^2$ such that for any subset of $S \subseteq \{1,2, \dots, n\}$ with $\lvert S \rvert \geq 3$, we have $\bigcap_{i \in S} \ell_i$ is nonempty if and only if $\bigcap_{i \in S} L_i$ is nonempty. We say that $\mathcal{A}$ is \textbf{geometrically realizable in} $\mathbb{CP}^2$ if there exists a geometric realization of $\mathcal{A}$ in $\mathbb{CP}^2$.
	\end{definition}

Consider the complex projective line $\ell$ with equation $ax+by+cz=0$.  Then the complex projective point \textbf{dual} to this line in $\mathbb{CP}^2$ is $\ell^*=[a:b:c] \in (\mathbb{CP}^2)^*$.  This is a convenient way to represent lines as points for the sake of notation.

	Now we are ready to introduce the moduli space.

	\begin{definition}
		The \textbf{ordered moduli space of a combinatorial line arrangement $\mathcal{A}$} is defined to be 
		\[
		\mathcal{M}_\mathcal{A} = \{(\ell_1^*, \ell_2^*, \ldots, \ell_n^*) \in ((\mathbb{CP}^2)^*)^n \mid \text{$(\ell_1, \ell_2, \ldots, \ell_n)$ is a geometic realization of $\mathcal{A}$ in $\mathbb{CP}^2$} \}/\PGL(3,\mathbb{C}),
		\]
		which are all geometric realizations of $\mathcal{A}$ in $\mathbb{CP}^2$, up to a projective transformation.  
	\end{definition}

We refer to this throughout as simply the moduli space of an arrangement, noting that there exists a related notion of the unordered moduli space, obtain by a quotient by the automorphism group. The moduli space is endowed the topology induced by the Zariski topology on $((\mathbb{CP}^2)^*)^n$. We will also sometimes endow the moduli space with the finer Euclidean topology.  An important quotient of the moduli space is $\mathcal{M}_\mathcal{A}^\mathbb{C}$, which is defined to be the quotient of $\mathcal{M}_\mathcal{A}$ under complex conjugation.

\begin{example}
\label{ex:fanoModuli}
The Fano arrangement is not geometrically realizable in $\mathbb{CP}^2$; its moduli space is empty.  
These notions are equivalent in general.  
\end{example}

Given a combinatorial line arrangement $\mathcal{A}$, it is a necessary condition for its moduli space $\mathcal{M}_\mathcal{A}$ to be reducible for two geometric realizations of $\mathcal{A}$ to be different topologically. If $\mathcal{M}_\mathcal{A}$ is irreducible, then we can apply Randell's Isotopy Theorem to show that all of the geometric realizations of $\mathcal{A}$ are the same topologically.  

\begin{theorem}
(Randell's Isotopy Theorem \cite{randell})\label{irr}  
Two combinatorially isomorphic arrangements $\mathcal{R}_1$ and $\mathcal{R}_2$ connected by a 1-parameter family of isomorphic arrangements have complements in $\mathbb{CP}^2$ that are diffeomorphic. Furthermore, $(\mathbb{CP}^2, \mathcal{R}_1)$ and $(\mathbb{CP}^2, \mathcal{R}_2)$ are of the same topological type.
\end{theorem}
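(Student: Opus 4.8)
This is the classical Isotopy Theorem of Randell \cite{randell}, and the plan is to reproduce its proof through Thom–Mather stratification theory. Reparametrize the given family as $\{\mathcal{A}_t\}_{t\in[0,1]}$ with $\mathcal{A}_0=\mathcal{R}_1$ and $\mathcal{A}_1=\mathcal{R}_2$; after a smooth reparametrization we may assume the lines $\ell_1(t),\dots,\ell_n(t)\subseteq\mathbb{CP}^2$ of $\mathcal{A}_t$ depend real-analytically on $t$, and by hypothesis the intersection lattice — equivalently, the combinatorial line arrangement — is the same for every $t$. First I would assemble the total space
\[ Z=\bigcup_{t\in[0,1]}\Bigl(\bigcup_{i=1}^{n}\ell_i(t)\Bigr)\times\{t\}\ \subseteq\ \mathbb{CP}^2\times[0,1], \]
a closed subset, together with its open complement $M=(\mathbb{CP}^2\times[0,1])\setminus Z$ and the projection $p\colon\mathbb{CP}^2\times[0,1]\to[0,1]$, whose fibre over $t$ is the pair $(\mathbb{CP}^2,\mathcal{A}_t)$ and whose restriction to $M$ has fibre $\mathbb{CP}^2\setminus\mathcal{A}_t$; note $p|_M$ is not proper, so one works with the compact total space rather than applying Ehresmann's theorem to $M$ directly.

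Next I would construct a Whitney stratification of $\mathbb{CP}^2\times[0,1]$ adapted to both $Z$ and $p$. For each flat $S$ of the common intersection lattice — a maximal set of indices with $\bigcap_{i\in S}\ell_i\neq\emptyset$ — the locus $X_S=\{(\bigcap_{i\in S}\ell_i(t),\,t):t\in[0,1]\}$ is a smooth submanifold projecting submersively onto $[0,1]$ (the graph of a real-analytic section if $|S|\geq 2$, a $\mathbb{CP}^1$-bundle with finitely many sections removed if $|S|=1$); taking these loci over the open interval $(0,1)$, the two endpoint slices, and the open stratum $M$ gives the stratification. Because no incidence is created or destroyed along the family, the strata neither collide nor change dimension, and one verifies Whitney's conditions (a) and (b): locally near any point the picture is a trivial deformation of a fixed linear arrangement, carried to the constant family by the ambient isotopy that moves the lines, so regularity reduces to the model case of (linear arrangement)$\times$(interval). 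In particular $p$ is a proper stratified submersion, properness being automatic since $\mathbb{CP}^2$ is compact.

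Then I would apply Thom's First Isotopy Lemma to $p$: a proper stratified submersion over $[0,1]$ is stratum-preservingly trivial, the trivialization $\Phi\colon\mathbb{CP}^2\times[0,1]\to\mathbb{CP}^2\times[0,1]$ over $[0,1]$ being obtained by integrating a controlled lift of $\partial/\partial t$ and carrying $Z$ onto $\bigl(\bigcup_i\ell_i(0)\bigr)\times[0,1]$. Restricting $\Phi$ to the slices $t=0$ and $t=1$ yields a homeomorphism of pairs $(\mathbb{CP}^2,\mathcal{R}_1)\cong(\mathbb{CP}^2,\mathcal{R}_2)$, the second assertion. For the first, the controlling vector field can be chosen smooth on the open stratum $M$; its flow therefore restricts to a diffeomorphism $\mathbb{CP}^2\setminus\mathcal{R}_1\xrightarrow{\ \sim\ }\mathbb{CP}^2\setminus\mathcal{R}_2$ of the complements.

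The hard part is the middle step — checking that the flat stratification is genuinely Whitney-regular and that $p$ is a stratified submersion on it. This is precisely where the hypothesis that the family consists of \emph{combinatorially isomorphic} arrangements, and not merely of line arrangements, is indispensable: it excludes interior degenerations such as a triple point acquiring higher multiplicity, or splitting into several double points, which would merge or destroy strata and break local triviality. Granting combinatorial constancy, the verification is local and reduces to the model case above; alternatively one may cite that a lattice-constant family of arrangement complements is an equisingular family to which the Thom–Mather machinery applies directly. Everything after that is a black-box application of the isotopy lemma.
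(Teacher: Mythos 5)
The paper does not prove this statement; it is quoted as a known result of Randell \cite{randell}, so there is no internal proof to compare against. Your outline — stratify the total space of the lattice-constant family by its flats, verify Whitney regularity using the combinatorial constancy of the family, and apply Thom's first isotopy lemma to obtain a stratum-preserving trivialization that is smooth on the open stratum, hence a homeomorphism of pairs and a diffeomorphism of complements — is essentially Randell's original argument and is sound, modulo the Whitney-regularity verification that you yourself correctly identify as the substantive step.
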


We also consider the moduli space modulo complex conjugation due the following result from Cohen and Suciu.

\begin{theorem} (Cohen-Suciu \cite[Theorem 3.9]{CS}) \label{cohen-suciu}
	The braid monodromies of complex conjugated curves are equivalent.
\end{theorem}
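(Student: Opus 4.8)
The plan is to show that complex conjugation, realized as a genuine diffeomorphism of $\mathbb{C}^2$, transports the braid monodromy of a plane curve $C$ onto that of its conjugate $\bar C$ by an explicit operation, and then to check that this operation leaves the braid monodromy type unchanged. First I would recall the ingredients: fixing affine coordinates $(x,y)$ generic for $C$ (of degree $n$), one forms the critical locus $\mathcal{A}=\{a_1,\dots,a_s\}\subset\mathbb{C}$ of the projection $p(x,y)=x$ restricted to $C$, picks a base point $*\in\mathbb{C}\setminus\mathcal{A}$, identifies the fiber $p^{-1}(*)\cap C$ with a point of $\mathrm{Conf}_n(\mathbb{C})$, and reads off the braid monodromy homomorphism $\alpha_C\colon\pi_1(\mathbb{C}\setminus\mathcal{A},*)\to\pi_1(\mathrm{Conf}_n(\mathbb{C}))=B_n$; equivalently, with respect to a geometric ordered basis of meridians one obtains a factorization $(g_1,\dots,g_s)$ of the full twist $\Delta^2$, each $g_i$ a conjugate of a positive power of $\sigma_1$, well defined up to the Hurwitz action on the basis and simultaneous conjugation in $B_n$. ``Equivalent'' in the statement refers to this equivalence class, suitably formulated so as to be a diffeomorphism invariant of the pair and independent of the chosen generic projection.

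Next comes the geometric heart: $\kappa\colon\mathbb{C}^2\to\mathbb{C}^2$, $(x,y)\mapsto(\bar x,\bar y)$, is a real-analytic diffeomorphism with $\kappa(C)=\bar C$ that intertwines $p$ with $\bar p$, so it carries a projection generic for $C$ to one generic for $\bar C$ and matches the entire braided picture of $C$ with that of $\bar C$: it sends $\mathcal{A}$ to $\bar{\mathcal{A}}$, the base point to $\bar *$, and each fiber configuration to its entrywise conjugate. I would then unwind what $\kappa$ does to the monodromy data. It reverses the complex orientation of the base $\mathbb{C}_x$ (so a counterclockwise geometric basis of meridians around $\mathcal{A}$ goes to a clockwise one around $\bar{\mathcal{A}}$, i.e.\ to the reversed list of inverse meridians) and reverses the complex orientation of the fiber $\mathbb{C}_y$ (so the induced automorphism of $B_n$ is the mirror $\mu\colon\sigma_i\mapsto\sigma_i^{-1}$); note that on $\mathbb{C}^2=\mathbb{R}^4$ these two reversals cancel, $\kappa$ being orientation-preserving there, which is exactly why no ``net mirror'' survives. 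Chasing the definitions yields that the braid monodromy factorization of $\bar C$ is $\bigl(\mu(g_s^{-1}),\,\mu(g_{s-1}^{-1}),\,\dots,\,\mu(g_1^{-1})\bigr)$: the order is reversed, but $\mu(g_i^{-1})=\mu(A_i)\sigma_1^{m_i}\mu(A_i)^{-1}$ is again a positive local monodromy with the same exponent $m_i$, and the product is again $\Delta^2$ since $\mu(\Delta^2)=\Delta^{-2}$.

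Finally, I would argue that this ``reverse, mirror, and invert each factor'' operation does not change the braid monodromy type. Concretely, one checks that it preserves the Zariski--van Kampen presentation of $\pi_1(\mathbb{C}^2\setminus C)$ (inverting a factor and permuting the list of relations leaves the normal subgroup they generate untouched), and more generally that it is absorbed by the equivalence because $\kappa$ respects the generic projection, reducing the claim to the independence of braid monodromy type from the choice of such projection. I expect this last step to be the main obstacle: it is precisely here that one must pin down which equivalence relation is intended and exploit the algebro-geometric structure of braid monodromies of curves, rather than treating the $g_i$ as abstract factors of $\Delta^2$, for which the reverse-and-mirror operation is not obviously an allowed move. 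Everything preceding this step is a careful but routine orientation bookkeeping once $\kappa$ is recognized as the relevant diffeomorphism.
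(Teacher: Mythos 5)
This statement is not proved in the paper at all: it is imported verbatim from Cohen and Suciu \cite[Theorem 3.9]{CS} and is used only to justify passing to the quotient $\mathcal{M}_\mathcal{A}^{\mathbb{C}}$ of the moduli space by complex conjugation. So there is no internal proof to compare yours against; the relevant benchmark is the original argument in \cite{CS}. Your setup and orientation bookkeeping are correct and follow the natural route: the anti-holomorphic diffeomorphism $(x,y)\mapsto(\bar x,\bar y)$ carries the fibered picture for $C$ to that for $\bar C$, reversing orientation in both base and fiber, and it produces for $\bar C$ the factorization obtained from $(g_1,\dots,g_s)$ by reversing the order, inverting each factor, and applying the mirror automorphism $\sigma_i\mapsto\sigma_i^{-1}$; each new factor is again a conjugate of a positive power of a half-twist and the product is again the full twist.

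The genuine gap is the final step, which you yourself flag: showing that this reversed, mirrored, inverted factorization is equivalent to the original under the admissible moves (Hurwitz moves of the factorization together with a global conjugation in $B_n$). That step is the entire content of the theorem, and neither of your proposed reductions closes it. Checking that the Zariski--van Kampen relations generate the same normal subgroup only shows that the fundamental groups of the complements are isomorphic, which is strictly weaker than equivalence of braid monodromies (and weaker than what the present paper needs from the theorem, namely diffeomorphic complements). Invariance under change of generic projection also cannot absorb the discrepancy, because conjugation is anti-holomorphic: it does not take the projection used for $C$ to another generic linear (holomorphic) projection for $\bar C$, so what you transport is not the braid monodromy of $\bar C$ computed from an alternative admissible projection, but exactly the mirrored data whose equivalence to the original is in question. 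Since the mirror automorphism is not inner (it acts by $-1$ on the abelianization of $B_n$), its effect cannot be discarded as a global conjugation, and a genuine additional argument --- the one supplied in \cite{CS} --- is still required.
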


	 For the purpose of classifying irreducibility of moduli spaces, it turns out that we only need to analyze the moduli spaces of \textit{non-reductive} combinatorial line arrangements. This is a consequence of the following result, which shows that we can remove a line incident to at most two points of higher multiplicity and use the irreducibility of the moduli space of the smaller arrangement to deduce the irreducibility of the moduli space of the original, larger arrangement.
	
	\begin{theorem} (Nazir-Yoshinaga \cite[reframed Lemma 3.2]{ny})
		Let $\mathcal{A} = (\mathcal{P}, \{L_1, L_2, \ldots, L_n\})$ be a combinatorial line arrangement with $L_n$ being incident to at most two points of higher multiplicity, and let $\mathcal{A}' = \mathcal{A} \setminus L_n$. Then $\mathcal{M}_\mathcal{A}$ is irreducible if $\mathcal{M}_{\mathcal{A}'}$ is irreducible. 
	\end{theorem}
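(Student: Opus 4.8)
The plan is to study the forgetful map $\pi\colon\mathcal{M}_\mathcal{A}\to\mathcal{M}_{\mathcal{A}'}$ that deletes the line $\ell_n$ from a geometric realization $(\ell_1,\dots,\ell_n)$ of $\mathcal{A}$, and to realize $\mathcal{M}_\mathcal{A}$, up to isomorphism, as a nonempty Zariski-open subvariety of an explicit projective bundle over $\mathcal{M}_{\mathcal{A}'}$. The conclusion then follows from two elementary facts: a Zariski-locally trivial fiber bundle with irreducible fibers over an irreducible base is irreducible, and a nonempty Zariski-open subvariety of an irreducible variety is irreducible.

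First I would check that $\pi$ is a well-defined morphism of quasi-projective varieties. Deleting $\ell_n$ from a realization of $\mathcal{A}$ produces $(\ell_1,\dots,\ell_{n-1})$, and by the definition of $\mathcal{A}\setminus L_n$---in which a point leaves $\mathcal{P}$ exactly when its multiplicity drops below three---the incidences of this tuple on every $S\subseteq\{1,\dots,n-1\}$ with $|S|\ge 3$ agree with those of $\mathcal{A}'$, so it is a realization of $\mathcal{A}'$; this assignment is $\PGL(3,\mathbb{C})$-equivariant and descends to the moduli spaces. Let $P_1,\dots,P_k$, with $k\le 2$ by hypothesis, be the points of multiplicity at least three of $\mathcal{A}$ lying on $L_n$. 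Each $P_t$ is the intersection of a fixed pair of lines of $\mathcal{A}'$, so $r\mapsto p_t:=r(P_t)$ is a morphism $\mathcal{M}_{\mathcal{A}'}\to\mathbb{CP}^2$, and one checks $p_1,\dots,p_k$ are always distinct (two lines of a realization meet in at most one point). Consider the incidence variety $\mathcal{I}=\{(r,\ell)\in\mathcal{M}_{\mathcal{A}'}\times(\mathbb{CP}^2)^*\mid p_t\in\ell\text{ for all }t\}$; its projection to $\mathcal{M}_{\mathcal{A}'}$ is a Zariski-locally trivial $\mathbb{CP}^{2-k}$-bundle---a global isomorphism if $k=2$, the pencil of lines through $p_1$ if $k=1$, and the trivial $(\mathbb{CP}^2)^*$-bundle if $k=0$. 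The heart of the matter is to locate $\mathcal{M}_\mathcal{A}$ inside $\mathcal{I}$: a pair $(r,\ell)\in\mathcal{I}$ extends $r$ to a realization of $\mathcal{A}$ precisely when $\ell$ creates none of the forbidden coincidences---it must avoid every multiple point of $r$ other than $p_1,\dots,p_k$, and for every double point $\{L_a,L_b\}$ of $\mathcal{A}'$ not already forced onto $L_n$ it must avoid $r(L_a)\cap r(L_b)$. Each such condition is the complement, within a fiber, of a ``dual hyperplane'', hence Zariski-open; so $\mathcal{M}_\mathcal{A}$ is open in $\mathcal{I}$, and since $(r,\ell)$ is recovered from $r$ and $\ell$, the map $\mathcal{M}_\mathcal{A}\to\mathcal{I}$ is an isomorphism onto this open subset.

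It then remains to run the topology. Since $\mathcal{M}_{\mathcal{A}'}$ is irreducible, $\mathcal{I}$ is irreducible: cover $\mathcal{M}_{\mathcal{A}'}$ by opens over which the bundle trivializes, observe that each piece $U\times\mathbb{CP}^{2-k}$ is irreducible and that all pairwise overlaps are nonempty, and deduce irreducibility of the union. Hence $\mathcal{M}_\mathcal{A}$, being a Zariski-open subvariety of the irreducible variety $\mathcal{I}$, is irreducible as soon as it is nonempty. For $k\le 1$ this is automatic: for any $r\in\mathcal{M}_{\mathcal{A}'}$, the finitely many forbidden lines cannot fill the fiber $\mathbb{CP}^{2-k}$, so a valid $\ell$ exists and $\pi$ is even surjective. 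For $k=2$ one gets only that $\mathcal{M}_\mathcal{A}$ is an open subset of $\mathcal{M}_{\mathcal{A}'}$; if it is empty then $\mathcal{M}_\mathcal{A}=\emptyset$ and, under the standing convention that an empty moduli space is not reducible, the statement holds, while otherwise the nonempty-open argument applies verbatim.

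The step I expect to be the main obstacle is the bookkeeping just above: pinning down exactly which coincidences must be excluded so that $(r,\ell)$ realizes $\mathcal{A}$ and not merely $\mathcal{A}'$ together with an unconstrained line, and verifying that each is an open rather than a closed condition on the fiber. The hypothesis $k\le 2$ enters twice and crucially: it keeps each fiber at most one-dimensional, so ``finitely many forbidden lines'' can never exhaust it---which is what keeps $\mathcal{M}_\mathcal{A}$ nonempty and $\pi$ dominant---and, when $k=2$, it collapses the fiber to a point, so that $\mathcal{M}_\mathcal{A}$ embeds directly as an open subvariety of $\mathcal{M}_{\mathcal{A}'}$.
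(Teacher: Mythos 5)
This theorem is quoted from Nazir--Yoshinaga \cite[Lemma 3.2]{ny} and the paper gives no proof of its own; your fibration argument (forgetting the added line, with fibers Zariski-open subsets of a point, a pencil $\cong\mathbb{CP}^1$, or $(\mathbb{CP}^2)^*$ according as $k=2,1,0$, so that irreducibility of the base passes to the total space and to its nonempty open subset $\mathcal{M}_\mathcal{A}$) is exactly the standard argument underlying the original proof, and it is correct. The only points to tidy are cosmetic: $r\mapsto r(P_t)$ is not literally well defined on the $\PGL(3,\mathbb{C})$-quotient, so the incidence construction should be carried out on the space of ordered realizations (or after fixing a projective basis) and then descended, and the degenerate case $\mathcal{M}_\mathcal{A}=\emptyset$ when $k=2$ should be noted as outside the intended scope of the lemma, as you already do.
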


We use the one-line extension construction, which we describe in the next section, to provide examples of combinatorial line arrangements with a reducible moduli space.  These come from known arrangements which we discuss in the next subsection.

\subsection{$(n_3)$ configurations}
\label{subsec:n3}
In anticipation of our later construction, we introduce these examples from the literature.

\begin{definition} (see for example \cite{grunbaum})
	An $(n_k)$ \textbf{configuration} is a combinatorial line arrangement with $n$ lines and $n$ points where each line is incident to $k$ points and each point is incident to $k$ lines. 
\end{definition}

We are specifically interested in $(n_3)$ configurations. Table \ref{n3} provides a non-exhaustive enumeration of $(n_3)$ configurations as found in Gr\"unbaum's textbook \cite[Theorem 2.2.1]{grunbaum}, in which he cites others \cite{martinetti, daub, gropp}. It is worth noting that since $(n_3)$ configurations can be seen as $3$-regular, $3$-uniform hypergraphs of girth $\geq 3$, any finite automorphism group is achievable with an $(n_3)$ configuration \cite{hyper}.

\begin{table}[htb]
	\begin{center}
		\begin{tabular}{|l||c|c|c|c|c|c|c|}
			\hline
			$n$ & $\leq 6$ & 7 & 8 & 9 & 10 & 11 & 12 \\
			\hline
			number of $(n_3)$ configurations & 0 & 1 & 1 & 3  & 10  & 31  & 229 \\
			\hline
		\end{tabular}
	\end{center}
	\caption{Number of $(n_3)$ configurations 
	as found in \cite[Theorem 2.2.1]{grunbaum}}\label{n3}
\end{table}

The construction in \cite{amram} begins with one of the three $(9_3)$ configurations and considers the set of all double points.  Then all subsets of cardinality three are considered, and this set is quotiented out by the automorphism group of the $(9_3)$ configuration.  This gives the possible combinatorial line arrangements of 10 lines obtained by adding a line passing through three double points.

Table \ref{93+} at the end of Section \ref{sec:corrections} gives the eleven arrangements that result from this construction. Following the naming convention in \cite{amram}, the numbers correspond to points in the original $(9_3)$ configuration, and the capital letters correspond to double points in the original $(9_3)$ configuration that have become triple points after the tenth line has been added.

\begin{remark}
\label{rem:corrections}
We note here that in \cite{amram} there are fourteen arrangements given, but in Section \ref{sec:corrections} we show that three of these are redundant due to arrangement $(9_3)_1$ having a larger automorphism group than acknowledged in that work.
\end{remark}

Table \ref{ten3} lists all ten $(10_3)$ arrangements.  For ease of naming new combinatorial line arrangements, we name each of the 15 double points  with the upper case letters $A$ through $O$ for each of the $(10_3)$ configurations $\mathcal{A}$.  These are given explicitly in Table \ref{tab:doubles}.

\begin{table}[htb]
	\setlength{\tabcolsep}{5pt}
	\begin{center}
		\begin{tabular}{|c|c|}\hline
			\makecell{
				$\text{Aut}((10_3)_1) \cong S_5$.\\
				\begin{tabular}{cccccccccc}
					$L_1$ & $L_2$ & $L_3$ & $L_4$ & $L_5$ & $L_6$ & $L_7$ & $L_8$ & $L_9$ & $L_{10}$ \\ \hline
					1&1&1&8&2&3&2&3&4&5\\
					2&4&6&9&4&5&6&7&6&7\\
					3&5&7&0&8&8&9&9&0&0
			\end{tabular}}
			&
			\makecell{$\text{Aut}((10_3)_2) \cong D_{12}$.\\
				\begin{tabular}{cccccccccc}
					$L_1$ & $L_2$ & $L_3$ & $L_4$ & $L_5$ & $L_6$ & $L_7$ & $L_8$ & $L_9$ & $L_{10}$ \\ \hline
					1&1&1&8&2&3&2&3&4&5\\
					2&4&6&9&4&7&6&5&6&7\\
					3&5&7&0&8&8&9&9&0&0
			\end{tabular}}
			\\\hline
			
			\makecell{
				$\text{Aut}((10_3)_3) \cong \Z/2\Z \times \Z/2\Z$.\\
				\begin{tabular}{cccccccccc}
					$L_1$ & $L_2$ & $L_3$ & $L_4$ & $L_5$ & $L_6$ & $L_7$ & $L_8$ & $L_9$ & $L_{10}$ \\ \hline
					1&1&1&8&2&3&2&3&4&5\\
					2&4&6&9&4&6&7&5&6&7\\
					3&5&7&0&8&8&9&9&0&0
			\end{tabular}}
			
			&
			\makecell{
				$\text{Aut}((10_3)_4) \cong S_4$.\\
				\begin{tabular}{cccccccccc}
					$L_1$ & $L_2$ & $L_3$ & $L_4$ & $L_5$ & $L_6$ & $L_7$ & $L_8$ & $L_9$ & $L_{10}$ \\ \hline
					1&1&1&8&2&3&2&3&4&5\\
					2&4&6&9&4&6&5&7&6&7\\
					3&5&7&0&8&8&9&9&0&0
			\end{tabular}}
			\\\hline
			
			\makecell{
				$\text{Aut}((10_3)_5) \cong \Z/2\Z$.\\
				\begin{tabular}{cccccccccc}
					$L_1$ & $L_2$ & $L_3$ & $L_4$ & $L_5$ & $L_6$ & $L_7$ & $L_8$ & $L_9$ & $L_{10}$ \\ \hline
					1&1&1&8&2&3&2&4&3&5\\
					2&4&6&9&4&7&5&6&6&7\\
					3&5&7&0&8&8&9&9&0&0
			\end{tabular}}
			
			&
			\makecell{
				$\text{Aut}((10_3)_6) \cong S_3$.\\
				\begin{tabular}{cccccccccc}
					$L_1$ & $L_2$ & $L_3$ & $L_4$ & $L_5$ & $L_6$ & $L_7$ & $L_8$ & $L_9$ & $L_{10}$ \\ \hline
					1&1&1&8&2&3&2&5&3&4\\
					2&4&6&9&4&7&6&7&5&6\\
					3&5&7&0&8&8&9&9&0&0
			\end{tabular}}
			\\\hline
			
			\makecell{
				$\text{Aut}((10_3)_7) \cong \Z/3\Z$.\\
				\begin{tabular}{cccccccccc}
					$L_1$ & $L_2$ & $L_3$ & $L_4$ & $L_5$ & $L_6$ & $L_7$ & $L_8$ & $L_9$ & $L_{10}$ \\ \hline
					1&1&1&2&4&6&5&3&7&2\\
					2&4&6&8&8&9&7&5&3&4\\
					3&5&7&9&0&0&8&9&0&6
			\end{tabular}}
			
			&
			\makecell{
				$\text{Aut}((10_3)_8) \cong \Z/3\Z$.\\
				\begin{tabular}{cccccccccc}
					$L_1$ & $L_2$ & $L_3$ & $L_4$ & $L_5$ & $L_6$ & $L_7$ & $L_8$ & $L_9$ & $L_{10}$ \\ \hline
					1&1&1&3&5&7&2&6&4&2\\
					2&4&6&8&8&9&7&5&3&4\\
					3&5&7&9&0&0&8&9&0&6
			\end{tabular}}
			\\\hline
			
			\makecell{
				$\text{Aut}((10_3)_9) \cong \Z/4\Z$.\\
				\begin{tabular}{cccccccccc}
					$L_1$ & $L_2$ & $L_3$ & $L_4$ & $L_5$ & $L_6$ & $L_7$ & $L_8$ & $L_9$ & $L_{10}$ \\ \hline
					1&1&1&2&4&6&5&3&2&3\\
					2&4&6&8&8&9&7&5&7&4\\
					3&5&7&9&0&0&8&9&0&6
			\end{tabular}}
			
			&
			\makecell{
				$\text{Aut}((10_3)_{10}) \cong \Z/10\Z$.\\
				\begin{tabular}{cccccccccc}
					$L_1$ & $L_2$ & $L_3$ & $L_4$ & $L_5$ & $L_6$ & $L_7$ & $L_8$ & $L_9$ & $L_{10}$ \\ \hline
					1&1&1&3&2&7&5&6&4&2\\
					2&4&6&8&8&9&7&5&3&4\\
					3&5&7&9&0&0&8&9&0&6
			\end{tabular}}\\\hline
		\end{tabular}
	\end{center}
	\caption{The ten $(10_3)$ configurations with their arrangement tables and automorphism groups}
	\label{ten3}
\end{table}

\begin{table}[h!]\scalebox{.9}{
\begin{tabular}{|c|c|c|c|c|c|c|c|c|c|c|}
\hline
Double	& $(10_3)_1$ & $(10_3)_2$	& $(10_3)_3$ & $(10_3)_4$& $(10_3)_5$ & $(10_3)_6$ & $(10_3)_7$ & $(10_3)_8$ & $(10_3)_9$ & $(10_3)_{10}$ \\
\hline
A & $L_1 \cap L_4$ & $L_1 \cap L_4$ & $L_1 \cap L_4$ & $L_1 \cap L_4$ & $L_1 \cap L_4$ & $L_1 \cap L_4$ & $L_1 \cap L_5$ & $L_1 \cap L_5$ & $L_1 \cap L_5$ & $L_1 \cap L_6$ \\
B & $L_1 \cap L_9$ & $L_1 \cap L_9$ & $L_1 \cap L_9$ & $L_1 \cap L_9$ & $L_1 \cap L_8$ & $L_1 \cap L_8$ & $L_1 \cap L_6$ & $L_1 \cap L_6$ & $L_1 \cap L_6$ & $L_1 \cap L_7$	\\
C & $L_1 \cap L_{10}$ &$L_1 \cap L_{10}$ &$L_1 \cap L_{10}$ &$L_1 \cap L_{10}$ &$L_1 \cap L_{10}$ &$L_1 \cap L_{10}$ & 	$L_1 \cap L_7$  & $L_1 \cap L_8$ & 	$L_1 \cap L_7$  & $L_1 \cap L_8$ \\
D & $L_2 \cap L_4$& $L_2 \cap L_4$ & $L_2 \cap L_4$ & $L_2 \cap L_4$ & $L_2 \cap L_4$&$L_2 \cap L_4$& $L_2 \cap L_4$ & $L_2 \cap L_4$ & $L_2 \cap L_4$ & $L_2 \cap L_4$ \\
E & $L_2 \cap L_7$ & $L_2 \cap L_6$ & $L_2 \cap L_6$ & $L_2 \cap L_6$& $L_2 \cap L_6$ & $L_2 \cap L_6$ & $L_2 \cap L_6$ & $L_2 \cap L_6$ & $L_2 \cap L_6$ & $L_2 \cap L_5$ \\
\hline
F & $L_2 \cap L_8$ & $L_2 \cap L_7$ & $L_2 \cap L_7$& $L_2 \cap L_8$ & $L_2 \cap L_9$ & $L_2 \cap L_7$ & $L_2 \cap L_9$ & $L_2 \cap L_7$ & $L_2 \cap L_9$ & $L_2 \cap L_6$  \\
G & $L_3 \cap L_4$ & $L_3 \cap L_4$& $L_3 \cap L_4$& $L_3 \cap L_4$& $L_3 \cap L_4$& $L_3 \cap L_4$& $L_3 \cap L_4$& $L_3 \cap L_4$& $L_3 \cap L_4$& $L_3 \cap L_4$ \\
H & $L_3 \cap L_5$ & $L_3 \cap L_5$& $L_3 \cap L_5$& $L_3 \cap L_5$& $L_3 \cap L_5$& $L_3 \cap L_5$& $L_3 \cap L_5$& $L_3 \cap L_5$& $L_3 \cap L_5$& $L_3 \cap L_5$ \\
I & $L_3 \cap L_6$ & $L_3 \cap L_8$& $L_3 \cap L_8$ & $L_3 \cap L_7$ & $L_3 \cap L_7$ & $L_3 \cap L_9$ & $L_3 \cap L_8$ & $L_3 \cap L_9$ & $L_3 \cap L_8$ & $L_3 \cap L_9$ \\
J & $L_5 \cap L_8$ & $L_5 \cap L_8$& $L_5 \cap L_8$ & $L_5 \cap L_8$& $L_5 \cap L_9$ & $L_5 \cap L_8$ & $L_4 \cap L_9$ & $L_4 \cap L_{10}$ & $L_4 \cap L_{10}$& $L_4 \cap L_{10}$ \\
\hline
K & $L_5 \cap L_{10}$& $L_5 \cap L_{10}$& $L_5 \cap L_{10}$& $L_5 \cap L_{10}$& $L_5 \cap L_{10}$  & $L_5 \cap L_9$	& $L_5 \cap L_8$ & $L_5 \cap L_{10}$ &	$L_5 \cap L_8$	&	$L_5 \cap L_8$	  \\
L & $L_6 \cap L_7$& $L_6 \cap L_7$& $L_6 \cap L_7$& $L_6 \cap L_7$& $L_6 \cap L_7$& $L_6 \cap L_7$& $L_6 \cap L_7$ & $L_6 \cap L{10}$& $L_6 \cap L_7$& $L_6 \cap L_{10}$	\\
M & $L_6 \cap L_9$ & $L_6 \cap L_9$& $L_6 \cap L_{10}$ & $L_6 \cap L_{10}$ & $L_6 \cap L_8$ &$L_6 \cap L_{10}$& $L_7 \cap L_{10}$ & $L_7 \cap L_8$ & $L_7 \cap L_{10}$ & $L_7 \cap L_9$ \\
N & $L_7 \cap L_{10}$ & $L_7 \cap L_{10}$ & $L_7 \cap L_9$& $L_7 \cap L_9$ &$L_7 \cap L_9$ & $L_7 \cap L_9$ & $L_8 \cap L_{10}$ & $L_7 \cap L_9$ & $L_8 \cap L_9$ & $L_7 \cap L_{10}$ \\
O & $L_8 \cap L_9$ & $L_8 \cap L_9$ & $L_8 \cap L_9$ &$L_8 \cap L_9$ & $L_8 \cap L_{10}$ & $L_8 \cap L_{10}$ & $L_9 \cap L_{10}$ & $L_8 \cap L_9$ & $L_9 \cap L_{10}$ & $L_8 \cap L_9$ \\
\hline
\end{tabular}}
\caption{ Labels of the double points in the $(10_3)$ arrangements }
\label{tab:doubles}
\end{table}

We highlight two geometric pictures of arrangements that better display the symmetry of their respective automorphism group:  the arrangements $(10_3)_7$ and $(10_3)_{10}$ in Figure \ref{fig:1037}.

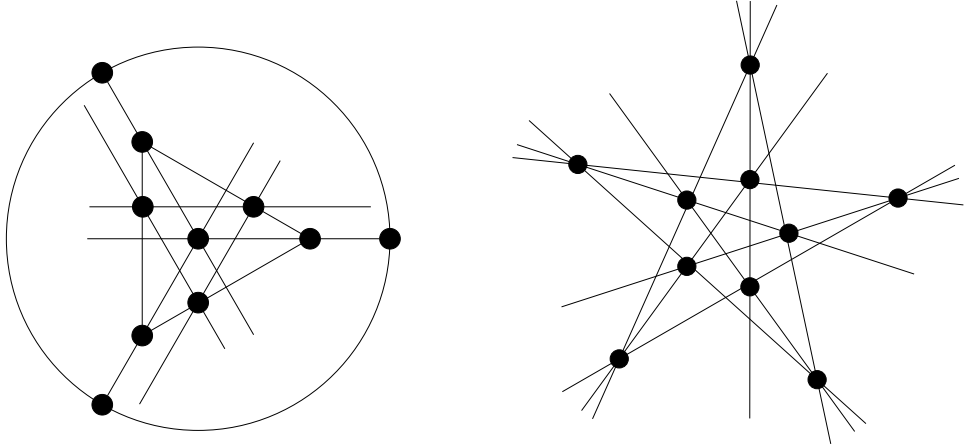
\begin{figure}[!htb]
	\begin{center}
		\scalebox{.85}{
			\begin{tikzpicture}
				[every node/.style={draw,circle,inner sep=4pt,fill=black,color=black,text opacity=1,scale=.55}]
				\foreach \a in {1,2,3} {
					\node at (\a*360/3-120: 3cm) (\a) {};
				}
				\foreach \a in {4,5,6} {
					\node at (\a*360/3-120: 1.75cm) (\a) {};
				}
				\foreach \a in {7,8,9} {
					\node at (\a*360/3+30-120: 1cm) (\a) {};
				}
				\node at (0: 0cm) (10) {};
				\draw (0,0) circle (3cm);
				
				\node [color = white, text = black, above left=1mm, fill opacity=0, draw opacity = 0] at (1) {2};
				\node [color = white, text = black, below=1mm, fill opacity=0, draw opacity = 0] at (2) {4};	
				\node [color = white, text = black, right=1mm, fill opacity=0, draw opacity = 0] at (3) {6};
				\node [color = white, text = black, below=1mm, fill opacity=0, draw opacity = 0] at (4) {3};
				\node [color = white, text = black, right=1mm, fill opacity=0, draw opacity = 0] at (5) {5};	
				\node [color = white, text = black, left=1mm, fill opacity=0, draw opacity = 0] at (6) {7};
				\node [color = white, text = black, above=1mm, fill opacity=0, draw opacity = 0] at (7) {9};	
				\node [color = white, text = black, below left=1mm, fill opacity=0, draw opacity = 0] at (8) {8};
				\node [color = white, text = black, left=1mm, fill opacity=0, draw opacity = 0] at (9) {0};
				\node [color = white, text = black, below=1mm, fill opacity=0, draw opacity = 0] at (10) {1};	
				
				\draw [shorten >= -0cm, shorten <= -1.9cm] (10)--(1);
				\draw [shorten >= -0cm, shorten <= -1.9cm] (10)--(2);
				\draw [shorten >= -0cm, shorten <= -1.9cm] (10)--(3);
				\draw [shorten >= -1cm, shorten <= -2cm] (7)--(8);
				\draw [shorten >= -1cm, shorten <= -2cm] (8)--(9);
				\draw [shorten >= -1cm, shorten <= -2cm] (9)--(7);
				\draw [shorten >= -.3cm, shorten <= -.3cm] (4)--(5);
				\draw [shorten >= -.3cm, shorten <= -.3cm] (5)--(6);
				\draw [shorten >= -.3cm, shorten <= -.3cm] (6)--(4);
				
				\node [color = white, text = black, fill opacity=0, draw opacity = 0] at (355: 2.5cm) {$L_1$};	
				\node [color = white, text = black, fill opacity=0, draw opacity = 0] at (115: 2.5cm) {$L_2$};	
				\node [color = white, text = black, fill opacity=0, draw opacity = 0] at (235: 2.5cm) {$L_3$};	
				\node [color = white, text = black, fill opacity=0, draw opacity = 0] at (200: 1.2cm) {$L_7$};	
				\node [color = white, text = black, fill opacity=0, draw opacity = 0] at (90: 1.2cm) {$L_8$};	
				\node [color = white, text = black, fill opacity=0, draw opacity = 0] at (330: 1.2cm) {$L_9$};	
				\node [color = white, text = black, fill opacity=0, draw opacity = 0] at (165: 2cm) {$L_4$};	
				\node [color = white, text = black, fill opacity=0, draw opacity = 0] at (45: 2cm) {$L_6$};	
				\node [color = white, text = black, fill opacity=0, draw opacity = 0] at (285: 2cm) {$L_5$};	
				\node [color = white, text = black, fill opacity=0, draw opacity = 0] at (180: 3.2cm) {$L_{10}$};
				
			\end{tikzpicture}
		}
		\hspace{.5cm}
		\scalebox{.75}{
			
			\begin{tikzpicture}
				[every node/.style={draw,circle,inner sep=4pt,fill=black,color=black,text opacity=1,scale=.55}, node distance=.9cm]
				\foreach \a in {1,2,3,4,5} {
					\node at (\a*360/5: 1cm) (\a) {};
				}
				\foreach \a in {6,7,8,9,10} {
					\node at (\a*360/5+12: 3cm) (\a) {};
				}

				\draw[color=white] (-1,-3.5) -- (1,-3.5);
				
				\node [color = white, text = black, above left = 1mm, fill opacity=0, draw opacity = 0] at (1) {0};
				\node [color = white, text = black, below left = 1mm, fill opacity=0, draw opacity = 0] at (2) {5};	
				\node [color = white, text = black, below = 1mm, fill opacity=0, draw opacity = 0] at (3) {2};
				\node [color = white, text = black, right = 1mm, fill opacity=0, draw opacity = 0] at (4) {9};
				\node [color = white, text = black, above right = 1mm, fill opacity=0, draw opacity = 0] at (5) {1};

				\node [color = white, text = black, left = 1mm, fill opacity=0, draw opacity = 0] at (6) {7};
				\node [color = white, text = black, above right = 1mm, fill opacity=0, draw opacity = 0] at (7) {4};
				\node [color = white, text = black, above left = 1mm, fill opacity=0, draw opacity = 0] at (8) {8};	
				\node [color = white, text = black, above right = 1mm, fill opacity=0, draw opacity = 0] at (9) {6};
				\node [color = white, text = black, above left = 1mm, fill opacity=0, draw opacity = 0] at (10) {3};

				\draw [shorten >= -1.3cm, shorten <= -2.5cm] (4)--(6);
				\draw [shorten >= -1.3cm, shorten <= -2.5cm] (3)--(10);
				\draw [shorten >= -1.3cm, shorten <= -2.5cm] (2)--(9);
				\draw [shorten >= -1.3cm, shorten <= -2.5cm] (1)--(8);
				\draw [shorten >= -1.3cm, shorten <= -2.5cm] (5)--(7);
				
				\draw [shorten >= -1.33cm, shorten <= -1.33cm] (6)--(8);
				\draw [shorten >= -1.33cm, shorten <= -1.33cm, name path=m] (7)--(9);
				\draw [shorten >= -1.33cm, shorten <= -1.33cm, name path=l] (8)--(10);
				\draw [shorten >= -1.33cm, shorten <= -1.33cm] (9)--(6);
				\draw [shorten >= -1.33cm, shorten <= -1.33cm] (10)--(7);
				
				\node [color = white, text = black, fill opacity=0, draw opacity = 0] at (15: 4.5cm) {$L_1$};	
				\node [color = white, text = black, fill opacity=0, draw opacity = 0] at (-15: 4.5cm) {$L_2$};	
				\node [color = white, text = black, fill opacity=0, draw opacity = 0] at (95: 4.5cm) {$L_3$};	
				\node [color = white, text = black, fill opacity=0, draw opacity = 0] at (22: 4.5cm) {$L_4$};	
				\node [color = white, text = black, fill opacity=0, draw opacity = 0] at (55: 4.5cm) {$L_5$};	
				\node [color = white, text = black, fill opacity=0, draw opacity = 0] at (85: 4.5cm) {$L_6$};	
				\node [color = white, text = black, fill opacity=0, draw opacity = 0] at (75: 4.5cm) {$L_7$};	
				\node [color = white, text = black, fill opacity=0, draw opacity = 0] at (130: 4.5cm) {$L_8$};	
				\node [color = white, text = black, fill opacity=0, draw opacity = 0] at (165: 4.5cm) {$L_9$};	
				\node [color = white, text = black, fill opacity=0, draw opacity = 0] at (150: 4.5cm) {$L_{10}$};

			\end{tikzpicture}
		}

		\caption{A geometric picture of $(10_3)_{7}$ on the left showing its $\mathbb{Z}/3\Z$ automorphism group, and a geometric picture of $(10_3)_{10}$ on the right showing the $\Z/5\Z$ subgroup of its $\mathbb{Z}/10\Z$ automorphism group}\label{fig:1037}
	\end{center}
\end{figure}

\section{The One-Line Extension Construction}
\label{sect:construction}

	We first describe the one-line extension construction in full generality. Then we describe the one-line extension construction through a specified number of double points. Lastly, we discuss the effects of this construction on the moduli spaces. 
	
	\begin{definition}
		Let $\mathcal{A} = (\mathcal{P}, \mathcal{L})$ be a combinatorial line arrangement. We define the \textbf{set of one-line extensions of $\mathcal{A}$} as 
		\[
			\OL(\mathcal{A}) = \left\{ \mathcal{A} \cup L \,\middle\vert\, L \in \binom{\mathcal{P} \cup \Doubles(\mathcal{A})}{k}, k \in \Z, k > 0, \mathcal{A} \cup L \text{ is a line arrangement} \right\}/\sim,
		\]
	where $A \cup L \sim A \cup L'$ if the two arrangements are combinatorially isomorphic.
	\end{definition}
	In other words, $\OL(\mathcal{A})$ is the set of all combinatorial line arrangements with $\left|\mathcal{L} \right| + 1$ lines containing $\mathcal{A}$ as a subarrangement, up to isomorphism. The requirement that $\mathcal{A} \cup L$ is a line arrangement ensures that every pair of lines intersect no more than once. 
	
	An interesting family of one-line extensions are extensions purely through double points. Some, but not all, isomorphisms between different extensions are induced by the automorphisms of $\mathcal{A}$. 
	\begin{definition}
	Let $\mathcal{A}$ be a combinatorial line arrangement. We define the \textbf{ set of extension lines of $\mathcal{A}$ by $k$ double points} as
	\[
	\OLExt(k, \mathcal{A}) = \left\{L \in \binom{\Doubles(\mathcal{A})}{k} \,\middle\vert\, \text{$\mathcal{A} \cup L$ is a line arrangement} \right\}/\Aut(\mathcal{A}).
	\]
\end{definition}

We take $k\geq3$ to avoid producing reductive arrangements.

Again, to make sure that $\mathcal{A} \cup L$ is a line arrangement, we have to make sure that every pair of lines intersect no more than once.

	\begin{definition}
The following set contains all the possible combinatorial line arrangements constructed by adding a line through $k$ double points in $\mathcal{A}$ up to isomorphism:
	\[
	\OLExtArrs(k, \mathcal{A}) = \{\mathcal{A} \cup L \mid [L] \in \OLExt(k, \mathcal{A}) \}/\sim,
	\]
	where $A \cup L \sim A \cup L'$ if the two arrangements are combinatorially isomorphic. We call this set \textbf{one line extensions of $\mathcal{A}$ by $k$ double points}. Each element is well-defined since for each $[L] \in \OLExt(k, \mathcal{A})$, if $[L] = [L']$, the automorphism of $\mathcal{A}$ induces an automorphism between $A \cup L$ and $A \cup L'$.
\end{definition}

\begin{remark}
	Note that it is possible that $[L] \neq [L']$ as elements of $\OLExt(k, \mathcal{A})$ yet $\mathcal{A} \cup L \cong \mathcal{A} \cup L'$. 
\end{remark}

\begin{example} Table \ref{tab:Self-exchangeExample} gives an example of a pair of isomorphic arrangements constructed using different classes of $\OLExt(3, \mathcal{A})$, where $\mathcal{A}$ is a $(10_3)$ configuration.
	
	\begin{table}[!htbp]
		{\setlength{\tabcolsep}{.4em} 	\begin{tabular}{cc}

				$(10_3)_5.BDL$ & 			$(10_3)_5.BIK$ \\
				
				\begin{tabular}{ccccccccccc}
					$L_1$ & $L_2$ & $L_3$ & $L_4$ & $L_5$ & $L_6$ & $L_7$ & $L_8$ & $L_9$ & $L_{10}$ & $L_{11}$ \\ \hline
					1&1&1&8&2&3&2&4&3&5&B\\
					2&4&6&9&4&7&5&6&6&7&D\\
					3&5&7&0&8&8&9&9&0&0&L\\
					B&D& &D& &L&L&B
				\end{tabular}
				&
				
				\begin{tabular}{ccccccccccc}
					$L_1$ & $L_2$ & $L_3$ & $L_4$ & $L_5$ & $L_6$ & $L_7$ & $L_8$ & $L_9$ & $L_{10}$& $L_{11}$ \\ \hline
					1&1&1&8&2&3&2&4&3&5&B\\
					2&4&6&9&4&7&5&6&6&7&I\\
					3&5&7&0&8&8&9&9&0&0&K\\
					B& &I& &K& &I&B& &K
				\end{tabular}
				
		\end{tabular}}
		\caption{Two isomorphic arrangements in $\OLExtArrs(3, (10_3)_5)$}
		\label{tab:Self-exchangeExample}
	\end{table}
	
	An isomorphism $\varphi:(10_3)_5.BDL \to (10_3)_5.BIK$ given in Table \ref{tab:Self-exchangeExampleIsomorphism}. Note that $\varphi(\{L_1, L_2, \dots, L_{10}\}) \neq \{L_1, L_2, \dots, L_{10}\}$ or, equivalently, $\varphi(L_{11}) \neq L_{11}$, so $(10_3)_5.BDL$ and $(10_3)_5.BIK$ would not have been identified via the quotient of $\OLExt(3, \mathcal{A})$ by $\Aut((10_3)_5)$. However, the two arrangements are identified since they are combinatorially isomorphic.

	\begin{table}[h!]
			\begin{tabular}{|c|c|c|c|c|c|c|c|c|c|c|c|c|c|c|}
				\hline
				Point $p$		&0&1&2&3&4&5&6&7&8&9&B&D&L\\\hline
				$\varphi(p)$	&8&6&I&7&B&9&3&0&K&2&1&4&5\\\hline
				\hline
				Line $L$	&	$L_1$ & $L_2$ & $L_3$ & $L_4$ & $L_5$ & $L_6$ & $L_7$ & $L_8$ & $L_9$ & $L_{10}$ & $L_{11}$ & & \\\hline
				$\varphi(L)$	&	$L_3$ & $L_8$ & $L_9$ & $L_5$ & $L_{11}$ & $L_{10}$ & $L_7$ & $L_1$ & $L_6$ & $L_{4}$ & $L_{2}$ & &\\\hline
			\end{tabular}
			\caption{ The explicit isomorphism between $(10_3)_5.BDL$ and $(10_3)_5.BIK$}
			\label{tab:Self-exchangeExampleIsomorphism}
		\end{table}
		
	\end{example}

	To detect this phenomenon, we rely on the following lemma:
	
	\begin{lemma}\label{self-exchange}
		Fix $k \in \Z$ such that $k \geq 3$. Let $\mathcal{A} = (\mathcal{P}, \mathcal{L})$ be a combinatorial line arrangement, and let $[L], [L'] \in \OLExt(k, \mathcal{A})$ such that $[L] \neq [L']$. If there exists an isomorphism $\varphi: \mathcal{A} \cup L \to \mathcal{A} \cup L'$, then there exists a line $\ell \in \mathcal{L}$ such that $(\mathcal{A} \cup L') \setminus \ell \cong \mathcal{A}$. 
	\end{lemma}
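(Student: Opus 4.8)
The plan is to extract the hidden line directly from the isomorphism $\varphi$. Write $\mathcal{A} \cup L = (\mathcal{P} \cup L, \{L_1', \dots, L_n', L\})$ and $\mathcal{A} \cup L' = (\mathcal{P} \cup L', \{L_1'', \dots, L_n'', L'\})$, where $L_i'$ (resp. $L_i''$) denotes $L_i$ with any newly-created triple points adjoined. Both arrangements have $n+1$ lines. If $\varphi(L) = L'$, then $\varphi$ would restrict to an isomorphism of the $n$-line subarrangements, and since each $L_i'$ is obtained from $L_i$ by adjoining points of $L$ (which $\varphi$ sends to points of $L'$), one checks this restriction realizes $[L] = [L']$ as elements of $\OLExt(k,\mathcal{A})$, contradicting the hypothesis $[L] \neq [L']$. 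Hence $\varphi(L) \neq L'$, so $\varphi(L) = L_j''$ for some $j \in \{1, \dots, n\}$, i.e. the new line of $\mathcal{A} \cup L$ is carried to one of the "old" lines of $\mathcal{A} \cup L'$.

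Now set $\ell := \varphi^{-1}(L')$, which is a line of $\mathcal{A} \cup L$ distinct from $L$ (since $\varphi(L) \neq L'$), hence $\ell = L_i'$ for some $i$; I claim $(\mathcal{A} \cup L') \setminus \ell' \cong \mathcal{A}$ after identifying $\ell' := \varphi(\ell)$. First note $\varphi$ restricts to a bijection from the line set of $(\mathcal{A} \cup L) \setminus \ell$ onto the line set of $(\mathcal{A} \cup L') \setminus L'$; the latter arrangement is exactly $\mathcal{A}$ with its lines possibly carrying extra adjoined points from $L'$, but since $(\mathcal{A} \cup L') \setminus L'$ by definition drops the points that have reverted to double points, we recover $\mathcal{A}$ on the nose. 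So it suffices to show $(\mathcal{A} \cup L) \setminus \ell \cong \mathcal{A}$, i.e. that deleting $\ell = L_i'$ from $\mathcal{A} \cup L$ returns (a copy of) $\mathcal{A}$. The lines of $(\mathcal{A}\cup L)\setminus \ell$ are $\{L_1', \dots, \widehat{L_i'}, \dots, L_n', L\}$ with reverted triple points removed — this is an $n$-line arrangement — and it is isomorphic to $\mathcal{A}$ via $\varphi$ followed by the canonical identification of $(\mathcal{A}\cup L')\setminus L'$ with $\mathcal{A}$. Thus, relabeling, the line $\varphi(\ell) \in \mathcal{L}$ (thinking of $\mathcal{L}$ as the line set common to $\mathcal{A}$ and $\mathcal{A} \cup L'$, where it sits as $\{L_1'', \dots, L_n''\}$) is the desired line $\ell$ with $(\mathcal{A} \cup L') \setminus \ell \cong \mathcal{A}$.

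I would organize the write-up as: (1) show $\varphi(L) \neq L'$ by a contradiction argument against $[L] \neq [L']$; (2) deduce $\varphi^{-1}(L') \in \mathcal{L}$ is a bona fide old line $\ell$ of $\mathcal{A}$; (3) check that deleting $\ell$ from $\mathcal{A} \cup L'$ and invoking the "double points are omitted" convention yields precisely $\mathcal{A} \cup L$ up to the combinatorial structure, hence $(\mathcal{A}\cup L')\setminus\ell \cong \mathcal{A}\cup L$ — wait, more carefully, $(\mathcal{A}\cup L')\setminus \ell \cong \mathcal{A}$ because under $\varphi^{-1}$ it corresponds to $(\mathcal{A}\cup L)\setminus L = \mathcal{A}$.

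The main obstacle, and the point needing the most care, is step (1): one must argue that if $\varphi$ fixes the new line setwise then $[L]$ and $[L']$ are genuinely equal in $\OLExt(k,\mathcal{A})$, not merely that $\mathcal{A} \cup L \cong \mathcal{A} \cup L'$ — the subtlety (flagged in the Remark preceding the lemma) is precisely that isomorphism of the extensions does not imply equality of the extension classes unless the isomorphism respects the distinguished new line. Here $\varphi|_{\mathcal{A}}$ must be shown to be an automorphism of $\mathcal{A}$ carrying the double-point set $L$ to $L'$, which requires tracking how the adjoined triple points of $L_i'$ and $L_i''$ encode membership of $L_i$ in the double points of $L$ versus $L'$; this bookkeeping is routine but is where an error would hide. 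A secondary (purely notational) nuisance is keeping straight the three copies of "the lines of $\mathcal{A}$" — as $\{L_i\}$, as $\{L_i'\}$ inside $\mathcal{A} \cup L$, and as $\{L_i''\}$ inside $\mathcal{A}\cup L'$ — and the fact that "$\setminus$" silently discards reverted double points.
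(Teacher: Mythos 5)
Your first step is exactly the paper's: from $[L]\neq[L']$ one concludes $\varphi(L)\neq L'$ (if $\varphi(L)=L'$ then $\varphi$ restricts to an automorphism of $\mathcal{A}$ carrying the double-point set $L$ to $L'$, forcing $[L]=[L']$ in $\OLExt(k,\mathcal{A})$), and the paper asserts this just as tersely as you do, so flagging it as the delicate point is fine. The problem is in the second half. You set $\ell:=\varphi^{-1}(L')$, which is a line of $\mathcal{A}\cup L$, and what your argument actually proves is $(\mathcal{A}\cup L)\setminus\ell\cong\mathcal{A}$ --- the mirror of the stated conclusion, which concerns deleting an original line from $\mathcal{A}\cup L'$. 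Your closing ``relabeling'' sentence, that $\varphi(\ell)\in\mathcal{L}$ is the desired line, is false: $\varphi(\ell)=\varphi(\varphi^{-1}(L'))=L'$ is the \emph{new} line, not an element of $\mathcal{L}$, and deleting it returns $\mathcal{A}$ only tautologically, which is not what the lemma asks. The same confusion appears in step (3) of your outline, which silently assumes $\varphi^{-1}(\ell)=L$; that holds for $\ell=\varphi(L)$, not for $\ell=\varphi^{-1}(L')$.

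The repair is one line and is precisely the paper's proof: take $\ell:=\varphi(L)$. Since $\varphi(L)\neq L'$, this is one of the original lines of $\mathcal{A}$ sitting inside $\mathcal{A}\cup L'$, so $\ell\in\mathcal{L}$, and $\varphi$ restricts to an isomorphism from $(\mathcal{A}\cup L)\setminus L=\mathcal{A}$ onto $(\mathcal{A}\cup L')\setminus\ell$, giving $(\mathcal{A}\cup L')\setminus\ell\cong\mathcal{A}$ directly. Equivalently, your own construction applied to $\varphi^{-1}\colon\mathcal{A}\cup L'\to\mathcal{A}\cup L$ (whose hypotheses are symmetric in $L$ and $L'$) produces exactly this $\ell$. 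So the underlying idea is correct and matches the paper, but as written the proof establishes a different statement than the one claimed, and the final identification of the deleted line is wrong.
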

	
	\begin{proof}
		Since $[L] \neq [L']$, we know that $\varphi(L) \neq L'$, meaning $\varphi(L) = \ell$ for some $\ell \in \mathcal{L}$, so $\mathcal{A} = (\mathcal{A} \cup L) \setminus L \cong (\mathcal{A} \cup L') \setminus \ell$. 
	\end{proof}

When considering one-line extensions of different combinatorial arrangements, it is also possible to have one-line extensions of non-isomorphic arrangements be isomorphic. In other words, it is possible for two non-isomorphic combinatorial line arrangements $\mathcal{A}_1 \not\cong \mathcal{A}_2$ to have $\mathcal{A}_1' \in \OLExtArrs(k, \mathcal{A}_1)$ and $\mathcal{A}_2' \in \OLExtArrs(k', \mathcal{C}_2)$ such that $\mathcal{A}_1' \cong \mathcal{A}_2'$.

\begin{example}
	The following is an example of a pair of isomorphic arrangements: one comes from some $\OLExtArrs(3, \mathcal{A})$ and the other comes from some $\OLExtArrs(3, \mathcal{B})$, where $\mathcal{A}$ and $\mathcal{B}$ are distinct, non-isomorphic $(10_3)$ configurations:
	\begin{table}[!htbp]\label{tab:General-exchangeExample}
		{\setlength{\tabcolsep}{.4em} 	\begin{tabular}{cc}

				$(10_3)_1.AEM$ & 			$(10_3)_6.KLO$ \\
				
				\begin{tabular}{ccccccccccc}
					$L_1$ & $L_2$ & $L_3$ & $L_4$ & $L_5$ & $L_6$ & $L_7$ & $L_8$ & $L_9$ & $L_{10}$ & $L_{11}$ \\ \hline
					1&1&1&8&2&3&2&3&4&5&A\\
					2&4&6&9&4&5&6&7&6&7&E\\
					3&5&7&0&8&8&9&9&0&0&M\\
					A&E& &A& &M&E& &M&
				\end{tabular}
				&
				
				\begin{tabular}{ccccccccccc}
					$L_1$ & $L_2$ & $L_3$ & $L_4$ & $L_5$ & $L_6$ & $L_7$ & $L_8$ & $L_9$ & $L_{10}$& $L_{11}$ \\ \hline
					1&1&1&8&2&3&2&5&3&4&K\\
					2&4&6&9&4&7&6&7&5&6&L\\
					3&5&7&0&8&8&9&9&0&0&O\\
					& & & &K&L&L&O&K&L
				\end{tabular}
				
		\end{tabular}}
		\caption{Two isomorphic arrangements: one comes from $\OLExtArrs(3, (10_3)_1)$ and the other comes from $\OLExtArrs(3, (10_3)_6)$} 
	\end{table}
	
	An isomorphism $\varphi:(10_3)_1.AEM \to (10_3)_6.KLO$ is given in Table \ref{tab:General-exchangeExampleIsomorphism}.
	
	\begin{table}[h!]
			\begin{tabular}{|c|c|c|c|c|c|c|c|c|c|c|c|c|c|c|}
				\hline
				Point $p$		&0&1&2&3&4&5&6&7&8&9&A&E&M\\\hline
				$\varphi(p)$	&2&5&O&7&K&3&4&1&L&6&9&0&8\\\hline
				\hline
				Line $L$	&	$L_1$ & $L_2$ & $L_3$ & $L_4$ & $L_5$ & $L_6$ & $L_7$ & $L_8$ & $L_9$ & $L_{10}$ & $L_{11}$ & & \\\hline
				$\varphi(L)$	&	$L_8$ & $L_9$ & $L_2$ & $L_7$ & $L_{11}$ & $L_{6}$ & $L_7$ & $L_3$ & $L_5$ & $L_{1}$ & $L_{4}$ & &\\\hline
			\end{tabular}
			\caption{ The explicit isomorphism between $(10_3)_1.AEM $ and $(10_3)_6.KLO$}
			\label{tab:General-exchangeExampleIsomorphism}
		\end{table}
	\end{example}

	To detect when one-line extensions from different arrangements are isomorphic, we have the following tool. 
	
	\begin{lemma}\label{general exchange}
		Let $\mathcal{A}_1 = (\mathcal{P}_1, \mathcal{L}_1)$ and $\mathcal{A}_2 = (\mathcal{P}_2, \mathcal{L}_2)$ be two arrangements such that $\mathcal{A}_1 \not\cong \mathcal{A}_2$, and let $[L] \in \OLExt(k, \mathcal{A}_1)$ and $[L'] \in \OLExt(k, \mathcal{A}_2)$. If there exists an isomorphism $\varphi: \mathcal{A}_1 \cup L \to \mathcal{A}_2 \cup L'$, then there exists a line $\ell \in \mathcal{L}_2$ such that $\mathcal{A}_2 \cup L' \setminus \ell \cong \mathcal{A}_1$. 
	\end{lemma}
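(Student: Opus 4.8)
The plan is to mimic the proof of Lemma \ref{self-exchange} almost verbatim, replacing the single ambient arrangement by the two arrangements $\mathcal{A}_1$ and $\mathcal{A}_2$. The key observation is that the distinguished added line $L$ in $\mathcal{A}_1 \cup L$ passes through $k$ \emph{double} points of $\mathcal{A}_1$, each of which becomes a triple point of $\mathcal{A}_1 \cup L$; whereas every line of $\mathcal{L}_1$ itself already passes through three points of higher multiplicity in $\mathcal{A}_1$, hence through at least three points of multiplicity $\geq 3$ in $\mathcal{A}_1 \cup L$. The same statement holds on the $\mathcal{A}_2$ side. So first I would argue that $\varphi(L)$ cannot be a line of $\mathcal{L}_2$ that was already present in $\mathcal{A}_2$ unless deleting that line recovers $\mathcal{A}_1$ — in fact the cleanest route is simply: set $\ell := \varphi(L) \in \mathcal{L}_2 \cup \{L'\}$.

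The argument then splits on whether $\ell = L'$ or $\ell \in \mathcal{L}_2$. First I would rule out $\ell = L'$: if $\varphi(L) = L'$, then $\varphi$ restricts to an isomorphism $\mathcal{A}_1 = (\mathcal{A}_1 \cup L)\setminus L \to (\mathcal{A}_2 \cup L')\setminus L' = \mathcal{A}_2$, contradicting $\mathcal{A}_1 \not\cong \mathcal{A}_2$. (Here I use that deleting the distinguished line from a one-line extension returns the original arrangement, which is immediate from the definitions of $\cup L$ and $\setminus L$, together with the convention that double points are omitted on deletion.) Hence $\ell = \varphi(L) \in \mathcal{L}_2$. Then, exactly as in Lemma \ref{self-exchange}, $\varphi$ restricts to an isomorphism
\[
\mathcal{A}_1 = (\mathcal{A}_1 \cup L)\setminus L \;\xrightarrow{\;\cong\;}\; (\mathcal{A}_2 \cup L')\setminus \varphi(L) \;=\; (\mathcal{A}_2 \cup L')\setminus \ell,
\]
which is the desired conclusion.

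The one point that needs a little care — and which I expect to be the main (minor) obstacle — is verifying that an isomorphism $\varphi \colon \mathcal{B} \to \mathcal{B}'$ of combinatorial line arrangements that sends a line $M \in \mathcal{L}(\mathcal{B})$ to a line $M' \in \mathcal{L}(\mathcal{B}')$ really does induce an isomorphism $\mathcal{B}\setminus M \to \mathcal{B}'\setminus M'$. This requires checking that $\varphi$ is compatible with the ``omit double points'' convention in the definition of $\setminus$: a point of $\mathcal{P}(\mathcal{B})$ lying on exactly three lines of $\mathcal{B}$, one of which is $M$, must be dropped when forming $\mathcal{B}\setminus M$, and $\varphi$ carries such points to points of $\mathcal{P}(\mathcal{B}')$ lying on exactly three lines of $\mathcal{B}'$, one of which is $M'$ — so they are dropped on the other side too, and $\varphi$ restricts to a bijection on the surviving points and lines preserving all incidences and double-point data. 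This is exactly the same bookkeeping already used implicitly in Lemma \ref{self-exchange}, so I would either cite that lemma's proof or isolate this compatibility as a one-line sub-observation and then conclude. No nontrivial computation is involved; the content is entirely in unwinding the definitions of $\cup L$, $\setminus \ell$, and isomorphism.
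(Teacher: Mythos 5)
Your proposal is correct and follows essentially the same argument as the paper: rule out $\varphi(L)=L'$ since that would force $\mathcal{A}_1\cong\mathcal{A}_2$, then set $\ell=\varphi(L)\in\mathcal{L}_2$ and conclude $\mathcal{A}_1=(\mathcal{A}_1\cup L)\setminus L\cong(\mathcal{A}_2\cup L')\setminus\ell$. The extra compatibility check with the ``omit double points'' convention is left implicit in the paper, but your handling of it is consistent with the paper's definitions.
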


	\begin{proof}
		 If $\varphi(L) = L'$, then it must be the case that $\varphi(\mathcal{A}_1) = \mathcal{A}_2$, which goes against our hypothesis that $\mathcal{A}_1 \not\cong \mathcal{A}_2$. We may then assume that $\varphi(L) \neq L'$ and $\varphi(L) = \ell$ for some $\ell \in \mathcal{L}_2$. Then $\mathcal{A}_1 = (\mathcal{A}_1 \cup L) \setminus L \cong (\mathcal{A}_2 \cup L') \setminus \ell$ by $\varphi$. 
	\end{proof}

Empirically, the one-line extension construction is effective since the construction generally reduces the dimension of the moduli space and a zero dimensional space is reducible as long as it is not empty or a singleton. The dimension of the moduli space is reduced after a one-line extension since requiring three points to be collinear is an additional constraint on the moduli space unless those three points are already collinear. Requiring four points to be collinear gives two additional constraints in the generic case, and requiring fives points to be collinear gives three additional constraints in the generic case. 

\section{One-Line Extensions of $(10_3)$ Configurations}\label{sect:extensionsof103}

Out of the eleven 
possible one-line extensions of $(9_3)$ configurations by three double points, six have a reducible moduli space. With this as motivation, we apply one-line extensions to all $(10_3)$ configurations in hopes of producing combinatorial line arrangements of 11 lines with a reducible moduli space.

We now apply the one-line extension construction through $k$ double points to all of the $(10_3)$ configurations, for $k = 3,4,5$. Note that $k \geq 6$ is impossible since 6 double points require 12 other lines. The construction is detailed in Algorithm \ref{const}.

\begin{algorithm}[!htb]
	\caption{Enumeration Algorithm}\label{const}
	\begin{algorithmic}
		\REQUIRE A list $\mathfrak{C}$ of all $(10_3)$ configurations up to isomorphism and their automorphism groups, and a value for $k$
		\ENSURE A list $\mathfrak{A}$ of combinatorial line arrangements of $11$ lines that can be constructed by adding a line through $k$ double points in a configuration in $\mathfrak{C}$, up to isomorphism
		
		\medskip
		\STATE Initialize $\mathfrak{A} := \emptyset$ 
		\STATE Initialize $\mathfrak{F} := \emptyset$, the arrangements needing additional testing
		\FOR {$\mathcal{C} = (\mathcal{P}, \mathcal{L}) \in \mathfrak{C}$}
		\STATE Calculate $\OLExt(k, \mathcal{C})$
		\FOR {$L \in \OLExt(k, \mathcal{C})$}
		\STATE Set $\mathfrak{A} := \mathfrak{A} \cup \{\mathcal{C} \cup L\}$
		\FOR {$L^* \in \mathcal{L}$}
		\IF {\text{$\mathcal{C} \cup L \setminus L^*$ is a $(10_3)$ configuration}}
		\STATE $\mathfrak{F} := \mathfrak{F} \cup \{\mathcal{C} \cup L\}$
		\STATE \textbf{break}
		\ENDIF
		\ENDFOR
		\ENDFOR
		\ENDFOR
		
		\medskip
		\STATE Check for isomorphisms among the arrangements in $\mathfrak{F}$, throwing away isomorphic copies in $\mathfrak{A}$
		\RETURN $\mathfrak{A}$
	\end{algorithmic}
\end{algorithm}

In order to start implementing this construction, we need the automorphism groups of the $(10_3)$ configurations. The generators for the automorphism groups were also provided in \cite{martinetti}. A correspondence between the naming of the configurations in \cite{martinetti} and \cite{grunbaum} is provided in \cite{grunbaum}.  We indicate the automorphism groups in Table \ref{ten3}.

Now we enumerate the one-line extensions of $(10_3)$ configurations through 3, 4, or 5 double points up to isomorphism utilizing Lemmas \ref{self-exchange} and \ref{general exchange}. Fix $k = 3,4,5$. We can calculate $\OLExt(k, (10_3)_i)$ for $i = 1, \dots, 10$ directly. Then we form the set
\[
	\bigcup_{i=1}^{10} \{(10_3)_i \cup L \mid [L] \in \OLExt(k, (10_3)_i \},
\]
which has all of the possible one-line extensions of $(10_3)$ configurations through $k$ double points, but some isomorphism classes might be represented more than once. Lemmas \ref{self-exchange} and \ref{general exchange} imply that such an isomorphism class can only come from arrangements of the form $(10_3)_i \cup L$ such that there exists a line $\ell$ of $(10_3)_i$ where $(10_3)_i \cup L \setminus \ell$ is a $(10_3)$ configuration. This creates a much smaller set to check for pairwise isomorphisms. This enumeration process is detailed in Algorithm \ref{const}.

Using the automorphism groups of all the $(10_3)$ configurations from Table \ref{ten3}, we can apply Algorithm $\ref{const}$. For $k = 3$, the first block of code results in three hundred thirty-seven arrangements. Lemma \ref{self-exchange} identifies a pair of isomorphic arrangements both constructed from $(10_3)_5$, leaving us with a subtotal of three hundred thirty-six arrangements. Lemma \ref{general exchange} identifies fifteen pairs of isomorphic line arrangements, and we are able to conclude that the remaining three hundred twenty-one arrangements are pairwise non-isomorphic. See Table \ref{tab:main3} for more details. For $k= 4$ and $k = 5$, the results of the construction are summarized in Tables \ref{tab:main4} and \ref{tab:main5}. The results in those tables about the reducibility of various notions of the moduli space are obtained by using tools from the next section.

\section{Irreducibility of Moduli Spaces}
\label{sect:moduli}

In order to use results from Section \ref{sect:background}, we must be able to calculate the moduli space of a combinatorial line arrangement. We follow Algorithm 2 in \cite{bokowski} to achieve this. 

Given a combinatorial line arrangement $\mathcal{A}$, we fix a projective basis of four points (or lines), accounting for the quotient by $\PGL(3,\mathbb{C})$. Then we add in a line or point from $\mathcal{A}$ one at a time, parameterizing when needed. We are then left with parameters $v_1, v_2, \dots, v_r$ and polynomials $f_1, f_2, \dots, f_s \in \C[v_1, v_2, \dots, v_r]$ such that $f_i(v_1, v_2, \dots, v_r) = 0$ for all $i = 1,2, \dots, s$ to ensure the collinearity relations. There are also $g_1, g_2, \dots, g_t \in \C[v_1, v_2, \dots, v_r]$ such that $g_j(a_1, a_2, \dots, a_r) \neq 0$ for all $j = 1,2, \dots, t$, since $g_i(v_1, v_2, \dots, v_r) = 0$ would correspond to a degenerate realizations of the arrangement.

\begin{example}\label{ex:ANOFirstParameterization}
		The construction of $(10_3)_5$ in \cite{amram} required the introduction of 3 parameters named $a,b,c$ and the constraint on the parameters is \[-a^2 b^2 c + a^2 b^2 + a^2 b c - a^2 c + 2 a b c^2 - 3 a b c - a c^2 + 2 a c - b c^2 + b c + c^2 - c = 0.\]
	
	One of the new arrangements is obtained by adding a line through the double points $A, N, O$  and has the arrangement table given in Table \ref{tab:1035}.
	\begin{table}[h!]
			\begin{tabular}{ccccccccccc}
				$L_1$ & $L_2$ & $L_3$ & $L_4$ & $L_5$ & $L_6$ & $L_7$ & $L_8$ & $L_9$ & $L_{10}$ & $L_{11}$ \\ \hline
				1&1&1&8&2&3&2&4&3&5&A\\
				2&4&6&9&4&7&5&6&6&7&N\\
				3&5&7&0&8&8&9&9&0&0&O\\
				A& & &A& & &N&O&N&O
			\end{tabular}
			\caption{An arrangement table for the arrangement $(10_3)_5.ANO$}
			\label{tab:1035}
	\end{table}

	The additional line requires the additional constraint that
	\begin{align*}
		2a^2b^2c - a^2b^2 - 2a^2bc^2 + a^2c^2 + ab^2c^3 - 2ab^2c + ab^2 - 2abc^3 &\\+ abc^2 + abc + 2ac^3 - 2ac^2 - bc^4 + bc^3 + bc^2 - bc + c^4 - 2c^3 + c^2 &= 0.
	\end{align*}
\end{example}

If the system $\{f_i = 0 \mid i = 1,2, \dots s\} \cup \{g_j \neq 0 \mid j =1,2,\dots, t\}$ has no solutions, then the combinatorial line arrangement is not geometrically realizable. If the system does have a solution, then we analyze the irreducibility of the moduli space. In order to look at the irreducibility of the moduli space, we eliminate the factors of each $f_i$ that contradict the system $\{g_j \neq 0 \mid j =1,2,\dots, t\}$. We end up with polynomials $h_1, h_2, \dots, h_m \in \C[v_1, v_2, \dots, v_r]$ so that the moduli space is the variety $V(h_1, h_2, \dots, h_m) \setminus V(g_1, g_2, \dots, g_t)$ and its closure is $V(h_1, h_2, \dots, h_m)$. Then the irreducible components of the moduli space correspond to the irreducible components of $V(h_1, h_2, \dots, h_m)$.

For all of our arrangements, $V(h_1, h_2, \dots, h_m)$ is a finite set or there exists a parameterization such that $m = 1$. This is achieved by selecting a projective basis that minimizes $m$. If $V(h_1, h_2, \dots, h_m)$ is a finite set, then $\mathcal{M}_\mathcal{A}$ is a finite set and the moduli space $\mathcal{M}_\mathcal{A}$ is reducible if it contains at least two points. If $m = 1$ and $h_1$ is irreducible over $\C$, then $V(h_1)$ and therefore $\mathcal{M}_\mathcal{A}^\mathbb{C}$ are irreducible. We will discuss how to decide whether $h_1$ is irreducible over $\C$ later in this section.

\begin{example}\label{ex:ANO}
	In Example \ref{ex:ANOFirstParameterization}, the moduli space is a variety that is described by two polynomials. It would be easier to determine irreducibility if we can reparamaterize the moduli space so that it is described by a singular polynomial. Indeed, we can reparametrize the moduli space by starting with the following four points and coordinates:
	\begin{equation*}
		9:\; [1:0:0]\hspace{10mm}
		2:\; [0:1:0]\hspace{10mm}
		0:\; [0:0:1]\hspace{10mm}
		12:\; [1:1:1]
	\end{equation*}

	Then the points and lines were constructed the following order: \[9, 2, 0, O, L_{10}, L_8, L_7, L_4, 5, L_1, A, L_{11}, N, L_9, 3, 6, L_2, 1, 4, L_3, L_5, 8, 7, L_6.\]
	
	This leads to a new parametrization with only two variables and the single constraint that
	\[
	a^4b^2 + a^4b - 3a^3b^2 - 3a^3b + a^2b^2 + 2a^2b - 2ab - a + 1 = 0,
	\]
	which is easier to interpret, since we will see that this is an irreducible polynomial over $\C$ by Theorem \ref{critc} and Lemma \ref{lem:zirred} so the moduli space is irreducible. 
\end{example}

Suppose $m = 1$ and $h_1$ is reducible over $\C$. Here we look at the intersections of the irreducible components of $V(h_1)$ to see if they are in $\mathcal{M}_\mathcal{A}^\mathbb{C}$. If an intersection point of the irreducible components of $V(h_1)$ is in $\mathcal{M}_\mathcal{A}^\mathbb{C}$, the irreducible components containing this intersection point are in the same Euclidean-connected component.

As we have discussed earlier, for many of our arrangements, we can translate the problem of determining the irreducibility of the moduli space into a problem of determining the irreducibliity of the polynomial describing the moduli space.

\begin{definition}
	A polynomial $f$ over a field $K$ is \textbf{absolutely irreducible} if $f$ is irreducible over any field extension of $K$, or equivalently, $f$ is irreducible over $\overline{K}$, the algebraic closure of $K$. 
\end{definition}

The problem of determining whether $f\in K[x_1, x_2,\ldots,x_n]$ is absolutely irreducible can be translated into a problem about the Newton polytope of $f$, as we are able to put a monoidal operation on Newton polytopes that mimics the multiplication of polynomials.

\begin{definition}
	Let $K$ be a field and let $f \in K[x_1,x_2,\ldots,x_n]$ be a polynomial. Then the \textbf{Newton polytope} of $f$ is defined to be the convex hull of a set of points in $\R^n$:
	\[
	P_f = \textup{Conv} \left\{(u_1, u_2, \ldots, u_n) \,\middle\vert\, \alpha_{u_1,u_2,\ldots,u_n} \neq 0, f = \sum_{j_1,j_2,\ldots,j_n} \alpha_{j_1,j_2,\ldots,j_n} x_1^{j_1}x_2^{j_2}\cdots x_n^{j_n} \right\}.
	\]
\end{definition}

The Newton polytope is defined this way so that the points in $P_f$ provide a geometric interpretation of the combinations of exponents in each term in $f$. Now we want an operation on Newton polytopes that provides an analogy for polynomial multiplication in this geometric interpretation.

\begin{definition}(see for example Bertone-Ch\`eze-Galligo \cite[Definition 5]{bcg})
	If $A_1$ and $A_2$ are two subsets of $\R^n$, then we define their \textbf{Minkowski sum} as 
	\[
		A_1 + A_2 = \{a_1 + a_2 \mid a_1 \in A_1, a_2 \in A_2 \}.
	\]
\end{definition}

In fact, if we view $(K[x_1,x_2,\ldots,x_n], \cdot)$ as a monoid and $(\textup{\textbf{NewP}}_n, +)$, where $\textup{\textbf{NewP}}_n$ is the set of all convex polytopes with nonnegative integer coordinates in $\R^n$, also as a monoid, then $P: K[x_1,x_2,\ldots,x_n] \to \textup{\textbf{NewP}}_n$ defined by $f \mapsto P_f$ is a monoid homomorphism by the following proposition. 

\begin{proposition}(Ostrowski \cite{ostrowski}, as found in Bertone-Ch\`eze-Galligo \cite[Lemma 6]{bcg})
	Let $f,g \in K[x_1,x_2,\ldots,x_n]$. Then $P_{fg} = P_f + P_g$. 
\end{proposition}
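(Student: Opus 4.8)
The statement to prove is the classical fact, attributed to Ostrowski, that the Newton polytope of a product equals the Minkowski sum of the Newton polytopes: $P_{fg} = P_f + P_g$ for $f, g \in K[x_1, \ldots, x_n]$. The plan is to prove the two inclusions $P_{fg} \subseteq P_f + P_g$ and $P_f + P_g \subseteq P_{fg}$ separately. The first inclusion is the easy direction: every exponent vector occurring in $fg$ is a sum of an exponent vector from $f$ and one from $g$ (by expanding the product term-by-term), so the support of $fg$ is contained in the Minkowski sum of the supports; taking convex hulls and using that the Minkowski sum of convex hulls is the convex hull of the Minkowski sum of the original sets (a standard fact about convex hulls that I would state and briefly justify) gives $P_{fg} \subseteq P_f + P_g$.

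For the reverse inclusion, the key point is to show that every \emph{vertex} of $P_f + P_g$ actually lies in the support of $fg$ (hence in $P_{fg}$), since a polytope is the convex hull of its vertices. First I would recall that each vertex $w$ of $P_f + P_g$ decomposes \emph{uniquely} as $w = u + v$ with $u$ a vertex of $P_f$ and $v$ a vertex of $P_g$; this uniqueness is the crucial structural fact about Minkowski sums of polytopes and follows by choosing a linear functional $\ell$ on $\R^n$ that is maximized on $P_f + P_g$ exactly at $w$, and observing that $\ell$ is then maximized on $P_f$ exactly at $u$ and on $P_g$ exactly at $v$ (if either face were higher-dimensional, so would be the corresponding face of the sum). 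Then, in the expansion $fg = \sum (\text{monomial of } f)(\text{monomial of } g)$, the only way to produce the monomial $x^w$ is to multiply $x^{u}$ by $x^{v}$ where $u$ comes from $f$ and $v$ from $g$ with $u + v = w$; by the uniqueness just established, the only such pair is $(u, v)$ with $u$ the distinguished vertex of $P_f$ and $v$ the distinguished vertex of $P_g$. Hence the coefficient of $x^w$ in $fg$ is exactly $\alpha_u \beta_v \neq 0$ (using that $K$ is a field, so there are no zero divisors and no cancellation can occur), so $w \in P_{fg}$.

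I would then conclude: since every vertex of $P_f + P_g$ lies in $P_{fg}$, and $P_{fg}$ is convex, we get $P_f + P_g = \mathrm{Conv}(\text{vertices of } P_f + P_g) \subseteq P_{fg}$, which combined with the first inclusion yields equality.

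\textbf{Main obstacle.} The heart of the argument — and the only step that is not a routine manipulation of supports and convex hulls — is the uniqueness of the vertex decomposition $w = u + v$ in a Minkowski sum, together with the no-cancellation observation that relies on $K$ being an integral domain. Getting the argument with the supporting linear functional stated cleanly (handling the possibility that the maximizing face is a higher-dimensional face rather than a single vertex, and that the support of $f$ may not contain every vertex of $P_f$ — though in fact it does, since vertices of a convex hull of a finite set of points are among those points) is where care is needed; everything else is bookkeeping.
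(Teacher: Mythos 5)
Your proof is correct, and it is the standard argument for Ostrowski's lemma: the easy inclusion $P_{fg}\subseteq P_f+P_g$ via supports and the identity $\mathrm{Conv}(A)+\mathrm{Conv}(B)=\mathrm{Conv}(A+B)$, and the reverse inclusion by showing each vertex $w$ of $P_f+P_g$ splits uniquely as $u+v$ with $u,v$ vertices of $P_f,P_g$ (via a supporting linear functional), so that the coefficient of $x^w$ in $fg$ is a single nonzero product $\alpha_u\beta_v$, with no cancellation because $K$ is an integral domain. Note, however, that the paper itself gives no proof of this proposition: it is quoted as a known result of Ostrowski, as presented in Bertone--Ch\`eze--Galligo, so there is no in-paper argument to compare yours against. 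Your treatment of the two delicate points --- that vertices of the convex hull of a finite set belong to that set (so $u$ and $v$ really occur in the supports of $f$ and $g$), and that uniqueness of the decomposition must be invoked at the level of supports, which follows since supports sit inside the polytopes --- is what makes the no-cancellation step legitimate, and you handle both correctly.
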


Using the language of Newton polytopes, the following proposition leads to a useful criterion for detecting the absolute irreducibility of polynomials. 

\begin{proposition} (\cite{rupprecht})
	If $f \in K[x_1,x_2,\ldots,x_n]$ is absolutely reducible and $f = f_1f_2 \cdots f_s$, then $P_f = P_{f_1} + P_{f_2} + \cdots P_{f_s} = \underbrace{P_{f_1} + \cdots P_{f_1}}_{\text{$s$ times}}$
\end{proposition}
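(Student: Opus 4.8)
The plan is to reduce everything to the monoid–homomorphism property of $f \mapsto P_f$ from the preceding proposition and then run a Galois-descent argument to force all the irreducible factors to share a single Newton polytope. Note first that the asserted final equality cannot hold for an arbitrary factorization — over $\Q$ the polynomial $f = x(x^2+1)$ has $\overline{\Q}$-factors with non-equal Newton polytopes — so I would read the statement with the standing hypothesis that $f$ is irreducible over $K$ and that $f = f_1 f_2 \cdots f_s$ is its factorization into irreducibles over $\overline{K}$, which is the situation in which the criterion is actually applied.

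First I would dispatch the equality $P_f = P_{f_1} + \cdots + P_{f_s}$: it is purely formal, following from the relation $P_{gh} = P_g + P_h$ of the previous proposition by induction on the number of factors, and it holds for \emph{any} factorization. The real work is to show $P_{f_1}, \ldots, P_{f_s}$ all coincide. For this I would choose a finite normal extension $L/K$ with all $f_i \in L[x_1,\ldots,x_n]$ and set $G = \operatorname{Gal}(L/K)$ (characteristic zero in the intended number-field application, so no separability issues arise). Since $G$ acts on $L[x_1,\ldots,x_n]$ coefficientwise and fixes $f \in K[x_1,\ldots,x_n]$, each $\sigma \in G$ carries the factorization $f = f_1\cdots f_s$ to another factorization into $\overline{K}$-irreducibles, hence permutes $\{f_1,\ldots,f_s\}$ up to nonzero constants.

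The key step is that this permutation action of $G$ is \emph{transitive}: otherwise the factors would split into two nonempty $G$-stable subsets, and the product over each subset would be $G$-invariant up to a constant; after a uniform normalization (take all factors monic in some variable in which $f$ has positive degree) each such product would lie in $K[x_1,\ldots,x_n]$ and be nonconstant, contradicting the irreducibility of $f$ over $K$. With transitivity in hand, every $f_j$ is a constant multiple of $\sigma(f_1)$ for some $\sigma \in G$; since a field automorphism sends nonzero elements to nonzero elements, it preserves the set of exponent vectors occurring in a polynomial, and multiplication by a constant changes nothing, so $P_{f_j} = P_{\sigma(f_1)} = P_{f_1}$. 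Hence $P_f = \underbrace{P_{f_1} + \cdots + P_{f_1}}_{s \text{ times}}$, as claimed.

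I expect the transitivity step — together with the careful bookkeeping of the ``up to a nonzero constant'' ambiguity inherent in factoring over $\overline{K}$ — to be the only nonroutine part, and it is precisely where the hypothesis that $f$ is irreducible over $K$ (and not merely absolutely reducible) is genuinely used. Everything else is the homomorphism property $P_{gh} = P_g + P_h$ of the preceding proposition, together with the elementary observation that the Galois action moves coefficients but never exponents.
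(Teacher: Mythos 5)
Your proof is correct and follows essentially the same route as the paper: the paper's entire proof is the one-line assertion that (under the implicit reading you correctly identify, $f$ irreducible over $K$ with $f_1,\dots,f_s$ its absolutely irreducible factors) the $f_i$ are conjugate over $K$ and hence have equal Newton polytopes, and your Galois-orbit/transitivity argument simply supplies the justification the paper omits. One small repair: constant scaling cannot in general make a multivariate factor monic in a chosen variable (its leading coefficient there is a polynomial), so normalize instead by the coefficient of each factor's lexicographically largest monomial (or invoke Hilbert 90 for the cocycle of constants); with that normalization your descent and transitivity argument goes through verbatim.
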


\begin{proof}
	The factors $f_1, f_2, \ldots, f_s$ are conjugates over $K$ so the corresponding Newton polytopes are the same.
\end{proof}

This leads to the following reformulation of a result by \cite{gao} as found in \cite{bcg}.
\begin{theorem}(Gao \cite{gao}, as found in Bertone-Ch\`eze-Galligo \cite{bcg})\label{critc}
	Let $f(x_1, x_2, \ldots, x_n)$ be an irreducible polynomial in $K[x_1, x_2, \ldots,x_n]$, where $K$ is a field. If the Newton polytope has the following convex hull:
	\[P_f = \{(x_1^{(1)}, x_2^{(1)}, \dots, x_n^{(1)}), (x_1^{(2)}, x_2^{(2)}, \dots, x_n^{(2)}), \ldots, (x_1^{(k)}, x_2^{(k)}, \dots, x_n^{(k)}) \}\] and the coordinates of the points on the convex hull are coprime, i.e.
	\begin{equation}
	\label{eq:Gao}
	\gcd(x_1^{(1)}, x_2^{(1)}, \dots, x_n^{(1)}, x_1^{(2)}, x_2^{(2)}, \dots, x_n^{(2)}, \ldots, x_1^{(k)}, x_2^{(k)}, \dots, x_n^{(k)}) = 1,
	\end{equation}
	then $f$ is absolutely irreducible.
\end{theorem}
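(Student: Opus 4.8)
The plan is to prove the contrapositive by way of the two propositions immediately preceding the statement. Assume $f$ is irreducible over $K$ but \emph{not} absolutely irreducible, so that over $\overline{K}$ it factors as $f = f_1 f_2 \cdots f_s$ into irreducibles with $s \ge 2$. First I would invoke the Proposition of \cite{rupprecht} recalled above: since $f$ is $K$-irreducible, the factors $f_1, \dots, f_s$ are Galois conjugates over $K$, hence (conjugation acting only on coefficients, not on exponents) have identical Newton polytopes, and by Ostrowski's formula $P_{fg} = P_f + P_g$ we obtain
\[
P_f \;=\; P_{f_1} + \cdots + P_{f_s} \;=\; \underbrace{P_{f_1} + \cdots + P_{f_1}}_{s \text{ times}}.
\]

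Next I would pass from this $s$-fold Minkowski sum to the dilation $sP_{f_1} := \{\, s\,p : p \in P_{f_1}\,\}$. For any convex set $P$ one has $P + P = 2P$: the inclusion $\supseteq$ is immediate, and $\subseteq$ holds because $a + b = 2\cdot\tfrac{a+b}{2}$ with $\tfrac{a+b}{2} \in P$ by convexity; induction then gives $\underbrace{P + \cdots + P}_{s} = sP$. Since a Newton polytope is a convex hull, hence convex, this yields $P_f = s P_{f_1}$ with $s \ge 2$.

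The last step is elementary bookkeeping about vertices. The vertices of a dilation $sP_{f_1}$ are exactly $s$ times the vertices of $P_{f_1}$ (the map $x \mapsto sx$ is a linear isomorphism and so preserves the face structure), and the vertices of $P_{f_1}$ are exponent vectors of $f_1$, hence lie in $\Z_{\ge 0}^n$. Therefore every coordinate of every vertex of $P_f$ — that is, every integer $x_j^{(i)}$ appearing in the list in the statement — is a multiple of $s$. Hence $s$ divides $\gcd\bigl(x_1^{(1)}, \dots, x_n^{(k)}\bigr)$, contradicting \eqref{eq:Gao} because $s \ge 2$. This contradiction proves that $f$ is absolutely irreducible.

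The one point that needs care — and that I would state rather than reprove, leaning on the cited proposition — is the claim that the $\overline{K}$-irreducible factors of a $K$-irreducible polynomial are Galois conjugates and therefore share a Newton polytope; this is standard field theory (phrased through the action of $\Aut(\overline{K}/K)$, with the usual adjustments when $\overline{K}/K$ is not separable), but it is genuinely the only non-combinatorial ingredient. Everything after it is convex geometry of lattice polytopes and is short. So I do not anticipate a real obstacle: the substantive work is already packaged in the preceding two propositions, and the theorem follows from the two-line dilation argument above.
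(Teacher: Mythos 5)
Your proof is correct, and it is essentially the intended argument: the paper itself does not prove Theorem \ref{critc} (it is quoted from Gao via Bertone--Ch\`eze--Galligo), but it stages exactly the two ingredients you use, namely Ostrowski's homomorphism property $P_{fg}=P_f+P_g$ and the proposition that the $\overline{K}$-factors of a $K$-irreducible polynomial are conjugate and hence share a Newton polytope. Your completion via $\underbrace{P_{f_1}+\cdots+P_{f_1}}_{s}=sP_{f_1}$ and divisibility of the vertex coordinates by $s\ge 2$, contradicting \eqref{eq:Gao}, is the standard way this criterion is derived, so there is no substantive divergence from the source the paper cites.
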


It is easy to check algorithmically the $\gcd$ condition of Equation \ref{eq:Gao} in Theorem \ref{critc}. However, we require a little bit more theory to see whether $f$ is indeed irreducible over $K$, in order to satisfy the first hypothesis of Theorem \ref{critc}. For the purpose of this paper, we are only looking at polynomials over $K = \mathbb{Q}$. In fact, the polynomials we are considering have coefficients in $\Z$, and due to Gauss's Lemma, a primitive polynomial $f$ irreducibe over $\Z$ is an irreducible polynomial viewed as an element in $\Q[x_1, x_2, \dots, x_n]$ so we need a criteria for a polynomial to be irreducible over $\Z$, such as the following.

\begin{lemma}\label{lem:zirred}
	Let $f \in \Z[x_1, x_2, \ldots, x_n]$. Now let $g(x_1) = f(x_1, t_2, t_3 \dots, t_n)$ for some particular values $t_2, t_3, \dots, t_n \in \Z$. Furthermore, let $p$ be a prime number and define $h(x_1) \in \Z/p\Z[x_1]$ to be $g(x_1)$ with its coefficients reduced modulo $p$. If both $h$ and $f$ have the same degree in the variable $x_1$ and $h$ is irreducible over $\Z/p\Z$, then $f$ is irreducible over $\Z$. 
\end{lemma}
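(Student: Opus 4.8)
The plan is to argue by contraposition: assume $f$ factors nontrivially over $\Z$ and produce a nontrivial factorization of $h$ over $\Z/p\Z$. First I would invoke Gauss's Lemma in the form already cited in the excerpt: since $f \in \Z[x_1,\dots,x_n]$, if $f$ is reducible over $\Q$ then (after clearing content) it is reducible over $\Z$, so it suffices to treat factorizations $f = f_1 f_2$ with $f_1, f_2 \in \Z[x_1,\dots,x_n]$ and neither a unit. The subtle point is that ``nontrivial over $\Z$'' must be interpreted with some care: a genuinely nontrivial factorization is one in which neither factor is $\pm 1$, but we should make sure neither factor is forced to be a constant polynomial in $x_1$ that is nonetheless a nonunit integer — this is exactly what the degree hypothesis on $h$ will rule out, as explained below.

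Next I would specialize. Substituting $x_i = t_i$ for $i = 2,\dots,n$ is a ring homomorphism $\Z[x_1,\dots,x_n]\to\Z[x_1]$, so $g(x_1) = f(x_1,t_2,\dots,t_n) = g_1(x_1) g_2(x_1)$ where $g_i(x_1) = f_i(x_1,t_2,\dots,t_n)\in\Z[x_1]$. Then I would reduce modulo $p$, which is another ring homomorphism $\Z[x_1]\to(\Z/p\Z)[x_1]$, obtaining $h(x_1) = \overline{g_1}(x_1)\,\overline{g_2}(x_1)$ in $(\Z/p\Z)[x_1]$. Now I bring in the degree hypothesis. We always have $\deg_{x_1} h \le \deg_{x_1} g \le \deg_{x_1} f$, where the two inequalities come from possible cancellation upon reduction mod $p$ and upon specialization, respectively. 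The hypothesis $\deg_{x_1} h = \deg_{x_1} f$ forces both inequalities to be equalities, and moreover forces no cancellation of the leading $x_1$-terms at either step; hence $\deg_{x_1}\overline{g_i} = \deg_{x_1} g_i = \deg_{x_1} f_i$ for $i=1,2$. In particular, since $\deg_{x_1} f_1 + \deg_{x_1} f_2 = \deg_{x_1} f \ge 1$ and neither $f_i$ is a unit, I would argue that neither $\overline{g_i}$ is a unit in $(\Z/p\Z)[x_1]$: if $\deg_{x_1} f_i \ge 1$ this is clear; if $\deg_{x_1} f_i = 0$ then $f_i$ is a nonunit integer, so its image in $\Z/p\Z$ is either a nonzero nonunit (impossible, $\Z/p\Z$ is a field) or $0$ — but if $\overline{g_i}=0$ then $h = 0$, contradicting $\deg_{x_1} h = \deg_{x_1} f \ge 1$. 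Thus $h = \overline{g_1}\,\overline{g_2}$ is a nontrivial factorization, contradicting the irreducibility of $h$ over $\Z/p\Z$.

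The main obstacle — really the only place the argument needs attention rather than routine bookkeeping — is handling the constant (degree-zero in $x_1$) factors correctly, since a ``nontrivial'' factorization over $\Z$ could a priori split off an integer scalar that behaves badly under reduction mod $p$ (e.g. a factor of $p$ itself would vanish mod $p$). The degree hypothesis $\deg_{x_1} h = \deg_{x_1} f$ is precisely the device that prevents this: it simultaneously guarantees that no leading $x_1$-coefficient is killed by reducing mod $p$ and that $h$ is not the zero polynomial, which together force both images $\overline{g_i}$ to be genuine nonunits. I would make this explicit in the write-up rather than leaving it implicit, since it is the crux of why the lemma is true. Everything else is an unwinding of the fact that specialization and reduction mod $p$ are ring homomorphisms that can only decrease the $x_1$-degree.
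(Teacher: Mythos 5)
The paper states this lemma without proof, so there is no official argument to compare against; your specialize-then-reduce-mod-$p$ strategy is certainly the natural (and surely the intended) one. However, your treatment of the factors of $x_1$-degree zero --- which you yourself identify as the crux --- does not work, and in fact the lemma as literally stated cannot be proved without an extra hypothesis. First, if $\deg_{x_1} f_i = 0$ it does not follow that $f_i$ is an integer: it can be a nonconstant polynomial in $x_2,\dots,x_n$. Then $\overline{g_i}$ is merely an element of $\Z/p\Z$, possibly a nonzero unit, and no contradiction with the irreducibility of $h$ arises. Concretely, $f = x_2(x_1+1)$ with $t_2=1$ gives $h = x_1+1$, which is irreducible over every $\Z/p\Z$ and has the same $x_1$-degree as $f$, yet $f$ is reducible over $\Z$ (and over $\Q$). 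Second, even in the case you do consider, where $f_i$ is a nonunit integer, the dichotomy ``nonzero nonunit or $0$'' is false: a ring homomorphism can send a nonunit to a unit, e.g.\ $f_i=2$ with $p=3$, so the factorization $h=\overline{g_1}\,\overline{g_2}$ you produce may simply be trivial. Indeed $f = 2(x_1+1)$ is reducible over $\Z$ while $h = 2x_1+2$ is irreducible over $\Z/3\Z$ and of the same degree. So the contradiction you aim for does not materialize in either degree-zero subcase.

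What your argument does establish (the degree bookkeeping there is fine, since $x_1$-degrees add over the integral domain $\Z[x_2,\dots,x_n]$ and over the field $\Z/p\Z$, and $h\neq 0$) is: if $\deg_{x_1}h=\deg_{x_1}f$ and $h$ is irreducible over $\Z/p\Z$, then $f$ has no factorization $f=f_1f_2$ with both factors of positive degree in $x_1$. To upgrade this to irreducibility over $\Z$ --- or over $\Q$, which is what is actually used afterwards via Gauss's Lemma --- you must add the hypothesis that $f$ has no nonunit factor of $x_1$-degree zero, i.e.\ that the coefficients of $f$, viewed as a polynomial in $x_1$ over $\Z[x_2,\dots,x_n]$, have trivial greatest common divisor. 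This hypothesis is implicit in the paper and holds in its applications (for the polynomial $f(a,b)$ of Example \ref{ex:ANO}, with $b$ playing the role of $x_1$, the coefficient of $b^0$ is $1-a$, which shares no factor with the leading coefficient $a^2(a^2-3a+1)$), but your write-up should state it explicitly and verify it; as written, the proof fails exactly at the point you flagged as the heart of the matter.
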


Since there are only a small number of irreducible univariate polynomials over $\Z/p\Z$ of a certain degree for a small prime $p$, it is easy to check if $h$ is irreducible over $\Z/p\Z$ in the above lemma. 

\begin{example}
	The polynomial in Example \ref{ex:ANO} is $f(a,b) := a^4b^2 + a^4b - 3a^3b^2 - 3a^3b + a^2b^2 + 2a^2b - 2ab - a + 1$, whose Newton polytope has vertices $(0,0), (1,0), (2,2), (4,1), (4,2)$. The greatest common divisor of the coordinates of these vertices is 1, so Theorem \ref{critc} implies that $f(a,b)$ is absolutely irreducible as long as $f(a,b)$ is irreducible over $\Q$. 
	
	We consider $f(-1, b) = 5b^2 +8b +2$. We see that $f(-1,b)$ has no roots modulo 7, so $f(-1,b)$ is irreducible over $\Z$ and therefore $f(a,b)$ is irreducible over $\Z$ by Lemma \ref{lem:zirred}. Then Gauss' Lemma implies that $f(a,b)$ is irreducible over $\Q$. 
\end{example}

Theorem \ref{critc} is easy to check and suffices to show absolute irreducibility for the majority of the polynomials with which we are concerned. The following theorem handles the rest of the polynomials we have by removing some points from the Newton Polytope and then applying Theorem \ref{critc}.

\begin{theorem}{(Bertone-Ch\`eze-Galligo \cite[Proposition 9]{bcg}, Kaltofen \cite{kaltofen})}\label{critcmodular}
	Let $f \in \Z[x_1, x_2, \dots, x_n]$ and let $\overline{f}$ be $f$ with its coefficients reduced modulo $p$ for some prime $p$. If $\deg(f) = \deg(\overline{f})$ and $\overline{f}$ is absolutely irreducible, then $f$ is absolutely irreducible.
\end{theorem}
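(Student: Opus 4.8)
The statement to prove is: if $f \in \Z[x_1,\dots,x_n]$ and $\overline{f}$ denotes $f$ with coefficients reduced modulo a prime $p$, then $\deg(f) = \deg(\overline{f})$ together with absolute irreducibility of $\overline{f}$ implies absolute irreducibility of $f$. The natural approach is the standard reduction-mod-$p$ argument, run through the Newton polytope so that it dovetails with the machinery (Ostrowski's Proposition and Gao's Theorem \ref{critc}) already set up in this section. I would argue by contraposition: assume $f$ is absolutely reducible and derive that $\overline{f}$ is absolutely reducible as well, provided the degree is preserved.

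The key steps, in order. First, suppose $f = g\cdot h$ is a nontrivial factorization over $\overline{\Q}$ (or equivalently over some number field $L$); after clearing denominators and using Gauss's Lemma (already invoked earlier in the section) we may take $g,h$ to have coefficients in the ring of integers $\mathcal{O}_L$, with neither a unit. Second, pick a prime $\mathfrak{p}$ of $\mathcal{O}_L$ lying over $p$ and reduce: $\overline{f} = \overline{g}\cdot\overline{h}$ over the residue field $\mathcal{O}_L/\mathfrak{p}$, which is a finite extension of $\F_p$. Third — and this is where the degree hypothesis enters — one must ensure the reduction does not collapse: since $\deg(f)=\deg(\overline{f})$, no cancellation of top-degree terms occurs, so $\deg(\overline{g}) + \deg(\overline{h}) = \deg(\overline{f}) = \deg(f) = \deg(g)+\deg(h)$, which forces $\deg(\overline{g}) = \deg(g)$ and $\deg(\overline{h}) = \deg(h)$; in particular neither factor reduces to a constant, so $\overline{f}$ is reducible over $\overline{\F_p}$, i.e.\ absolutely reducible. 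Contrapositive gives the theorem. Alternatively — and perhaps cleaner in this paper's polytope-centric framing — I would phrase it via Newton polytopes: by the Proposition attributed to Rupprecht, absolute reducibility of $\overline{f}$ would force $P_{\overline{f}}$ to be a nontrivial integer multiple of a smaller polytope; the degree-preservation hypothesis guarantees $P_{\overline{f}} = P_f$ (no vertex is lost under reduction), so $P_f$ would be a nontrivial multiple, and then running the contrapositive of Gao's criterion would be the wrong direction — so in fact the direct factorization argument above is the one to commit to, with the polytope remark only as motivation for why the degree condition is the precise hypothesis needed.

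The main obstacle, and the only genuinely nontrivial point, is the control of the factors' coefficients under reduction: a priori a factorization $f = gh$ over $\overline{\Q}$ need not have $g,h$ with $\mathfrak{p}$-integral coefficients for the chosen prime $\mathfrak{p}$, and even if integral, the leading coefficients of $g$ and $h$ could individually vanish mod $\mathfrak{p}$ while their product's leading term survives because of cancellation elsewhere. This is exactly why the hypothesis $\deg(f)=\deg(\overline{f})$ is not cosmetic: it rules out the cancellation scenario. The clean way to handle integrality is to normalize the factorization — take $f$ primitive, write $f = c \cdot g^* h^*$ with $g^*, h^*$ primitive in $\mathcal{O}_L[x_1,\dots,x_n]$ and $c \in \mathcal{O}_L$, note $c$ is a unit since $f$ is primitive, and then $\mathfrak{p}$-integrality is automatic. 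Since this argument is entirely standard (it is, as the attribution indicates, due to Kaltofen and recorded in Bertone–Chèze–Galligo \cite[Proposition 9]{bcg}), I would keep the proof short: state the contrapositive, invoke Gauss's Lemma to get an $\mathcal{O}_L$-coefficient factorization into primitive factors, reduce mod a prime above $p$, and observe that the degree hypothesis forbids either factor from degenerating to a constant.
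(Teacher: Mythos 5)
The paper does not actually prove Theorem \ref{critcmodular}: it is quoted as a known result of Kaltofen, in the form recorded by Bertone--Ch\`eze--Galligo, so there is no internal proof to compare against. Judged on its own, your argument is the standard reduction-mod-$p$ proof and is essentially correct: pass to the contrapositive, factor $f=gh$ nontrivially over a number field $L$, reduce modulo a prime $\mathfrak{p}\mid p$, and use $\deg(f)=\deg(\overline{f})$ together with additivity of total degree over the residue field to force $\deg(\overline{g})=\deg(g)\geq 1$ and $\deg(\overline{h})=\deg(h)\geq 1$, so $\overline{f}$ is reducible over $\overline{\F}_p$. You also correctly discard the Newton-polytope detour, which indeed does not run in the needed direction.

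Two technical points need repair in the write-up. First, your normalization ``write $f=c\,g^*h^*$ with $g^*,h^*$ primitive in $\mathcal{O}_L[x_1,\dots,x_n]$'' invokes Gauss's Lemma over $\mathcal{O}_L$, which is in general not a UFD, and a polynomial over $L$ cannot always be scaled to one with unit content ideal over $\mathcal{O}_L$. The fix is to localize at the chosen prime: work in the discrete valuation ring $\mathcal{O}_{L,\mathfrak{p}}$, where every polynomial over $L$ can be scaled to a primitive one and Gauss's Lemma holds, so the factors are automatically $\mathfrak{p}$-integral with nonzero reduction; this is exactly the integrality issue you flag, and the DVR is where it gets resolved. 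Second, ``take $f$ primitive'' is not automatic for a given $f\in\Z[x_1,\dots,x_n]$; the clean statement is that if $p$ divides the content of $f$ then $\overline{f}=0$ and the hypothesis $\deg(f)=\deg(\overline{f})$ already fails, while otherwise the content is a unit modulo $p$ and does not affect the factorization argument. With these adjustments your proof is complete and matches the cited sources in spirit.
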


Now Theorems \ref{critc} and \ref{critcmodular} provide us with criteria to test the irreducibility of the polynomial constraints on the parameters of the moduli spaces. This then allows us to determine the number of connected components the moduli space has. The results are shown in Theorems \ref{thm:main}, \ref{thm:four}, and \ref{thm:five}, showing the number of arrangements in each family, classified by their moduli spaces. We break down the tables by which $(10_3)$ configuration each arrangement is a one-line extension of and also by whether the moduli space is irreducible, empty, $\mathcal{M}_\mathcal{A}$ being reducible and $\mathcal{M}_\mathcal{A}^\mathbb{C}$ being irreducible, or $\mathcal{M}_\mathcal{A}^\mathbb{C}$ being reducible. The subtotal simply adds all of the columns together, whereas the total accounts for identifications up to isomorphism.

\section{Corrections}
\label{sec:corrections}

 We now remedy a small error in previous work by the first author with Amram, Teicher, and Ye that leads to three arrangements being counted twice.

	\begin{proposition}
	\label{prop:correction}
		There are only two distinct line arrangements with ten lines obtained by adding a tenth line through three double points in the configuration $(9_3)_1$. 
	\end{proposition}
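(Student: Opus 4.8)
The plan is to trace the discrepancy to an undercount of $\Aut((9_3)_1)$ in \cite{amram} and to redo the enumeration of $\OLExt(3,(9_3)_1)$ by hand, organized around the double‑point structure of $(9_3)_1$. First I would record $\Doubles((9_3)_1)$: since $(9_3)_1$ has nine triple points it has $\binom{9}{2}-9\binom{3}{2}=9$ double points, and the relation ``two double points share a line of $(9_3)_1$'' partitions them into three triangles $T_1,T_2,T_3$, where $T_i$ is the set of the three double points lying on a fixed triple $\mathcal T_i$ of lines and where $\mathcal T_1\sqcup\mathcal T_2\sqcup\mathcal T_3$ is the full set of nine lines. Because two double points in the same $T_i$ share a line, while the three double points on a valid extension line must lie on six distinct lines of $(9_3)_1$ (otherwise some line of $(9_3)_1$ would meet the new line at two points, contradicting that it is a line arrangement), every valid extension line picks exactly one double point from each $T_i$. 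Hence there are exactly $3\cdot3\cdot3=27$ valid extension lines before quotienting by $\Aut((9_3)_1)$, and each is encoded by the triple $(m_1,m_2,m_3)$ of lines it \emph{misses}, with $m_i\in\mathcal T_i$.

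Next I would isolate an $\Aut((9_3)_1)$-invariant of an extension line: whether its missed triple $\{m_1,m_2,m_3\}$ is concurrent in $(9_3)_1$. Since any two lines lying in a single $\mathcal T_i$ meet at a double point, no triple point of $(9_3)_1$ has two of its three lines in one $\mathcal T_i$; thus the nine triple points inject into the $27$ transversal triples of $\mathcal T_1\times\mathcal T_2\times\mathcal T_3$, and each triple point is the missed triple of exactly one extension line. Therefore, of the $27$ valid extension lines exactly $9$ have a concurrent missed triple and the other $18$ do not, and $\Aut((9_3)_1)$ preserves this dichotomy. This already forces at least two isomorphism classes, with representatives $(9_3)_1.CFH$ (whose missed triple consists of the three lines of $(9_3)_1$ through a triple point) and $(9_3)_1.CDG$ (whose missed triple is not concurrent).

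To see there are at most two, I would compute $\Aut((9_3)_1)$ explicitly — writing down generators and checking that it properly contains the proper subgroup of $\Aut((9_3)_1)$ used in \cite{amram}, which is precisely the oversight that caused three arrangements to be listed twice there. A direct orbit computation then shows $\Aut((9_3)_1)$ is transitive both on the nine concurrent‑missed‑triple extension lines (the correspondence of the previous paragraph being $\Aut$-equivariant) and on the eighteen non‑concurrent ones; equivalently, the automorphisms of $(9_3)_1$ absent from the group of \cite{amram} carry the arrangements that \cite{amram} listed as distinct onto $(9_3)_1.CFH$ or $(9_3)_1.CDG$. So $\OLExt(3,(9_3)_1)$ has exactly two classes; since $(9_3)_1.CFH\not\cong(9_3)_1.CDG$ — separated by a direct combinatorial invariant such as their automorphism groups — no further identification can occur, in particular none via Lemma~\ref{self-exchange}, and hence $\OLExtArrs(3,(9_3)_1)$ has exactly two elements.

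The only genuinely nontrivial step, and precisely where \cite{amram} slipped, is pinning down the full $\Aut((9_3)_1)$ and verifying transitivity on the eighteen‑element orbit; but once explicit generators are in hand this reduces to tracking a finite orbit, and everything else is bookkeeping with the triangles $T_1,T_2,T_3$.
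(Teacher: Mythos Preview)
Your approach is correct and genuinely different from the paper's. The paper does not redo the enumeration: it accepts the five arrangements listed in \cite{amram} as a complete set of representatives under the (too-small) dihedral group used there, and then simply exhibits explicit isomorphisms $(9_3)_1.CDI\cong(9_3)_1.CFH$ and $(9_3)_1.CDG\cong(9_3)_1.CDH\cong(9_3)_1.CFG$, written out point-by-point and line-by-line in tables. There is no structural analysis of $\Doubles((9_3)_1)$, no invariant, no orbit count---just two concrete bijections that collapse five to two.

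Your route is more conceptual: the triangle decomposition of the nine double points and the ``concurrency of the missed triple'' invariant explain \emph{why} there are exactly two classes and predict the orbit sizes $9+18$ before any computation. This scales better and actually illuminates the structure. The price is that two steps remain as assertions: transitivity of $\Aut((9_3)_1)$ on the eighteen non-concurrent missed triples, and the non-isomorphism $(9_3)_1.CFH\not\cong(9_3)_1.CDG$. For the latter, your appeal to ``a direct combinatorial invariant such as their automorphism groups'' is fine, but a cleaner finish is already implicit in your setup: in either extended arrangement the added line $L_{10}$ is the \emph{unique} line whose removal yields a $(9_3)$ configuration (any other three-triple line shares a triple point with a second three-triple line, which would drop to two triples upon removal), so your concurrency invariant is an invariant of the isomorphism type of the extension and Lemma~\ref{self-exchange} is vacuous here. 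The paper's tabular proof, by contrast, leaves nothing to verify beyond inspection---but offers no structural insight.
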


This replaces the following now-incorrect result:

	\begin{proposition} (Amram-Cohen-Teicher-Ye \cite[Lemma 8.2]{amram})
		There are five line arrangements with ten lines obtained by adding a tenth line through three double points in the configuration $(9_3)_1$. 
	\end{proposition}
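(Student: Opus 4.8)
The plan is to redo the orbit count of \cite[Lemma 8.2]{amram} using the \emph{correct} automorphism group of $(9_3)_1$, which turns out to be strictly larger than the group implicitly used in that reference. The three extra identifications produced by the missing automorphisms are exactly what collapses the five claimed arrangements down to two, in agreement with Remark \ref{rem:corrections}.

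First I would fix an explicit arrangement table for $(9_3)_1$ and list its nine double points; a $(9_3)$ configuration has $\binom{9}{2}-9\cdot\binom{3}{2}=9$ double points, which I would label as in \cite{amram}. Since $\mathcal{A}\cup L$ must be a line arrangement, the new line may meet each original line at most once, so the admissible members of $\OLExt(3,(9_3)_1)$ are precisely the $3$-subsets of $\Doubles((9_3)_1)$ whose six constituent lines are distinct, i.e.\ the size-three partial matchings in the ``double-point graph'' on the nine lines. I would enumerate these directly.

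The crux is to pin down $\Aut((9_3)_1)$ exactly. I would exhibit generators as permutations of the nine lines that carry triple points to triple points, checking incidence preservation column by column in the arrangement table, and then confirm that the group they generate is the \emph{full} combinatorial automorphism group of $(9_3)_1$ — in particular that it properly contains the subgroup used in \cite{amram}. Verifying that each additional generator is genuinely an automorphism of the combinatorial structure (rather than merely a symmetry of one drawing), and that the enlarged group is complete, is where I expect the main difficulty to lie; this is precisely the point at which the earlier work erred.

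With the correct group in hand, I would compute the orbits of the admissible $3$-subsets under the induced action on $\Doubles((9_3)_1)$, expecting the five representatives of \cite{amram} to merge into at most two orbits. I would then apply Lemma \ref{self-exchange} to each surviving representative to detect (or rule out) any further identification coming from a line of $(9_3)_1$ whose deletion from $\mathcal{A}\cup L$ returns a $(9_3)$ configuration, so that the final count is of genuine combinatorial isomorphism classes and not merely $\Aut$-orbits. Finally, to be certain the count has not been over-collapsed, I would separate the two surviving classes by a combinatorial invariant — for instance the multiset of point multiplicities of $\mathcal{A}\cup L$ or the order of $\Aut(\mathcal{A}\cup L)$ — thereby confirming that exactly two distinct line arrangements arise.
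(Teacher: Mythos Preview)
You have correctly identified that the stated proposition is \emph{false} as written---the paper itself flags it as ``now-incorrect'' and replaces it by Proposition~\ref{prop:correction} (there are only two such arrangements). Your proposal is therefore not a proof of the quoted statement but a proof of its correction, which is the right thing to aim for.

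Your route and the paper's route to the correction differ. You propose to (i) pin down $\Aut((9_3)_1)$ exactly, (ii) enumerate the admissible $3$-subsets of $\Doubles((9_3)_1)$, (iii) compute orbits under the induced action, and (iv) use Lemma~\ref{self-exchange} to rule out further identifications. The paper instead takes the five arrangements named in \cite{amram} at face value and simply \emph{exhibits explicit isomorphisms} among them: one table giving $\varphi:(9_3)_1.CDI\to(9_3)_1.CFH$ and another giving $(9_3)_1.CDG\cong(9_3)_1.CDH\cong(9_3)_1.CFG$, each presented as an explicit bijection on points and lines. No orbit count, no enumeration of admissible triples, and no appeal to Lemma~\ref{self-exchange} is made; the full group $\Aut((9_3)_1)\cong PG(2,3)$ of order~$108$ is cited from Coxeter only as motivation for why the extra identifications exist, not as a computational tool. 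Your approach is more systematic and would certify completeness of the reduction from first principles; the paper's approach is shorter and entirely constructive but relies on already knowing the five named representatives from \cite{amram}. Note also that the paper's explicit isomorphisms all fix $L_{10}$, so they are induced by automorphisms of $(9_3)_1$ itself---your step~(iv) would therefore find nothing new in this case, and the distinguishing-invariant step you propose is likewise absent from the paper's argument.
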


	The argument in \cite{amram} relies on the automorphism group of the configuration $(9_3)_1$ to identify isomorphic arrangements with ten lines constructed from the configuration $(9_3)_1$ as described. It appears that \cite{amram} has depicted the configuration $(9_3)_1$ as having an automorphism group isomorphic to $D_6$, the dihedral group of order twelve. However, the automorphism group of the configuration $(9_3)_1$ has order larger than twelve.
	
	\begin{proposition}(Coxeter \cite{pappus})
				The automorphism group of the configuration $(9_3)_1$ is isomorphic to $PG(2,3)$, a group of order one hundred eight.
	\end{proposition}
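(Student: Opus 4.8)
The plan is to identify $(9_3)_1$, the Pappus configuration, with the affine plane $AG(2,3)$ over $\F_3$ minus a single parallel class of lines, and then read off the collineation group. Explicitly, $(9_3)_1$ has nine points, which I label $0,1,\dots,8$, and nine lines
\[
\{1,2,3\},\ \{1,4,5\},\ \{1,6,7\},\ \{2,4,8\},\ \{3,6,8\},\ \{5,7,8\},\ \{0,4,6\},\ \{0,2,7\},\ \{0,3,5\}.
\]
First I would observe that exactly $9$ of the $\binom{9}{2}=36$ point-pairs fail to be collinear, since each of the nine lines contributes three collinear pairs and $36-27=9$. I would then form the \emph{non-collinearity graph} on the nine points, placing an edge on each non-collinear pair, and check by direct inspection that it is a disjoint union of three triangles, namely $\{0,1,8\}$, $\{2,5,6\}$, and $\{3,4,7\}$. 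This partition into three mutually non-collinear triples is defined purely from the incidence data, so it is preserved by every element of $\Aut((9_3)_1)$.

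Next I would coordinatize the points by
\[
0=(0,0),\ 1=(0,1),\ 8=(0,2),\ 6=(1,0),\ 2=(1,1),\ 5=(1,2),\ 4=(2,0),\ 3=(2,1),\ 7=(2,2)\in\F_3^2,
\]
and verify that under this assignment the nine configuration lines are exactly the affine lines of slopes $0$, $1$, and $2$ (three lines in each class), while the three triangles found above are exactly the three vertical lines $x=0,1,2$. Thus $(9_3)_1$ is precisely $AG(2,3)$ with the vertical parallel class deleted, and that deleted class is the one reconstructed combinatorially in the previous paragraph.

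With this identification the two containments become routine. Any collineation of $AG(2,3)$ fixing the vertical parallel class permutes the remaining nine lines among themselves, hence restricts to an automorphism of $(9_3)_1$. Conversely, an automorphism of $(9_3)_1$ preserves the nine configuration lines and, by the triangle argument, also the three vertical lines; since it therefore preserves all twelve affine lines, the classification of collineations of $AG(2,3)$ (using that $\F_3$ admits no nontrivial field automorphism) shows it is induced by an element of $AGL(2,3)$, the group of affine maps $x\mapsto Ax+b$ with $A\in\GL(2,3)$ and $b\in\F_3^2$. Hence $\Aut((9_3)_1)$ is exactly the setwise stabilizer of the vertical parallel class in $AGL(2,3)$.

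Finally I would compute the order. The group $AGL(2,3)$ has order $9\cdot 48=432$ and acts on the four parallel classes of $AG(2,3)$ through the quotient $\PGL(2,3)\cong S_4$; this action is transitive, so the stabilizer of one class has index $4$ and order $108$. Concretely the stabilizer consists of the affine maps $x\mapsto Ax+b$ with $A$ ranging over the lower-triangular subgroup $B$ of $\GL(2,3)$ (those $A$ fixing the vertical direction, of which there are $2\cdot 2\cdot 3=12$) and $b\in\F_3^2$, again giving $9\cdot 12=108$. This is the group Coxeter denotes $PG(2,3)$. The one delicate point, and where I would be most careful, is the reconstruction step: a priori an automorphism need only permute the nine configuration lines, so the entire argument rests on showing the deleted parallel class is forced; the non-collinearity graph is precisely the device that pins it down and upgrades a combinatorial symmetry to an affine collineation.
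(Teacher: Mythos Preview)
Your argument is correct and self-contained. The identification of $(9_3)_1$ with $AG(2,3)$ minus one parallel class is accurate for the line list given in the paper, your non-collinearity graph really is the disjoint union of the triangles $\{0,1,8\}$, $\{2,5,6\}$, $\{3,4,7\}$, and the order computation via the index-$4$ stabilizer in $AGL(2,3)$ is clean.

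However, there is nothing to compare against: the paper does not prove this proposition. It is stated as a citation of Coxeter's article \emph{The Pappus configuration and its groups} and used as a black box to correct the earlier undercount of $\lvert\Aut((9_3)_1)\rvert$. Your proof is therefore strictly more than what the paper supplies. It is also essentially the classical route to this result (the Pappus configuration \emph{is} the affine plane of order $3$ with a parallel class removed, and the deleted class is recoverable from the incidences), so while it goes beyond the paper, it is not a novel approach relative to the literature the paper cites.
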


	A larger automorphism group would mean that we could potentially identify some of the arrangements with ten lines constructed from $(9_3)_1$ as isomorphic. Indeed, this is the case.

We conclude with a proof of Proposition \ref{prop:correction}.

	\begin{proof}
		In \cite{amram}, five line arrangements with 10 lines are said to be generated as described, but we show explicitly that some of the new line arrangements are isomorphic.
		
		Using the names of the arrangements in \cite{amram}, we first show $(9_3)_1.CDI \cong (9_3)_1.CFH$ , whose arrangement tables are given in Table \ref{tab:1stIsom}.

\begin{table}[h!]
			\begin{tabular}{cc}

			$(9_3)_1.CDI$ & 			$(9_3)_1.CFH$ \\
			
			\begin{tabular}{cccccccccc}
				$L_1$ & $L_2$ & $L_3$ & $L_4$ & $L_5$ & $L_6$ & $L_7$ & $L_8$ & $L_9$ & $L_{10}$ \\ \hline
				1&1&1&2&3&5&0&0&0&C\\
				2&4&6&4&6&7&4&2&3&D\\
				3&5&7&8&8&8&6&7&5&I\\
				C&I& &D& &C& &I&D&
			\end{tabular}
&
			
			\begin{tabular}{cccccccccc}
				$L_1$ & $L_2$ & $L_3$ & $L_4$ & $L_5$ & $L_6$ & $L_7$ & $L_8$ & $L_9$ & $L_{10}$ \\ \hline
				1&1&1&2&3&5&0&0&0&C\\
				2&4&6&4&6&7&4&2&3&F\\
				3&5&7&8&8&8&6&7&5&H\\
				C&H&F&F&H&C& & & &
			\end{tabular}
	
			\end{tabular}
\caption{{ Arrangement tables for two isomorphic arrangements $(9_3)_1.CDI$ and $(9_3)_1.CFH$ }}
\label{tab:1stIsom}
\end{table}

		The arrangements $(9_3)_1.CDI$ and $(9_3)_1.CFH$ are isomorphic due to the following isomorphism $\varphi$.  The isomorphism $\varphi$ also induces an automorphism of the lines.  See Table \ref{tab:1stIsomPointsLines}.
		
\begin{table}[h!]
			\begin{tabular}{|c|c|c|c|c|c|c|c|c|c|c|c|c|}
				\hline
				Point $p$&0&1&2&3&4&5&6&7&8&C&D&I\\\hline
				$\varphi(p)$&4&3&1&2&6&8&0&5&7&C&F&H\\\hline
				\hline
				Line $L$	&	$L_1$ & $L_2$ & $L_3$ & $L_4$ & $L_5$ & $L_6$ & $L_7$ & $L_8$ & $L_9$ & $L_{10}$ & & \\\hline
				$\varphi(L)$	&	$L_1$ & $L_5$ & $L_9$ & $L_3$ & $L_8$ & $L_6$ & $L_7$ & $L_2$ & $L_4$ & $L_{10}$ & & \\\hline
			\end{tabular}
\caption{{ An explicit isomorphism $\varphi: (9_3)_1.CDI \to (9_3)_1.CFH$}}
\label{tab:1stIsomPointsLines}
\end{table}

		Now we show that $(9_3)_1.CDG \cong_{\varphi_1} (9_3)_1.CDH \cong_{\varphi_2} (9_3)_1.CFG$ , whose arrangement tables are given in Table \ref{tab:2ndIsom}. 
		
		{\setlength{\tabcolsep}{0.25em} 
\begin{table}[h!]
\resizebox{\columnwidth}{!}{%
				\begin{tabular}{ccc}
				$(9_3)_1.CDG$&$(9_3)_1.CDH$&$(9_3)_1.CFG$\\

			\begin{tabular}{cccccccccc}
				$L_1$ & $L_2$ & $L_3$ & $L_4$ & $L_5$ & $L_6$ & $L_7$ & $L_8$ & $L_9$ & $L_{10}$ \\ \hline
				1&1&1&2&3&5&0&0&0&C\\
				2&4&6&4&6&7&4&2&3&D\\
				3&5&7&8&8&8&6&7&5&G\\
				C& & &D&G&C& &G&D&
			\end{tabular}
&
			
			\begin{tabular}{cccccccccc}
				$L_1$ & $L_2$ & $L_3$ & $L_4$ & $L_5$ & $L_6$ & $L_7$ & $L_8$ & $L_9$ & $L_{10}$ \\ \hline
				1&1&1&2&3&5&0&0&0&C\\
				2&4&6&4&6&7&4&2&3&D\\
				3&5&7&8&8&8&6&7&5&H\\
				C&H& &D&H&C& & &D&
			\end{tabular}
&
			\begin{tabular}{cccccccccc}
				$L_1$ & $L_2$ & $L_3$ & $L_4$ & $L_5$ & $L_6$ & $L_7$ & $L_8$ & $L_9$ & $L_{10}$ \\ \hline
				1&1&1&2&3&5&0&0&0&C\\
				2&4&6&4&6&7&4&2&3&F\\
				3&5&7&8&8&8&6&7&5&G\\
				C& &F&F&G&C& &G& &
			\end{tabular}
		\end{tabular}
}
\caption{{ Arrangement tables for three isomorphic arrangements $(9_3)_1.CDG, (9_3)_1.CDH$ and $(9_3)_1.CFG$ }}
\label{tab:2ndIsom}
\end{table}
	}

		All three arrangements are isomorphic due to the isomorphisms $\varphi_1$ and $\varphi_2$.  The isomorphisms $\varphi_1, \varphi_2$ also induce an automorphisms of the lines. See Table \ref{tab:2ndIsomPointsLines}.			\end{proof}
		
\begin{table}[h!]
			\begin{tabular}{|c|c|c|c|c|c|c|c|c|c|c|c|c|}
				\hline
				Point $P$&0&1&2&3&4&5&6&7&8&C&D&G\\\hline
				$\varphi_1(P)$&4&7&5&8&0&2&6&1&3&C&D&H\\\hline
				$\varphi_2(\varphi_1(P))$&6&5&8&7&4&1&0&3&2&C&F&G\\\hline
							\hline
Line $L$	&					$L_1$ & $L_2$ & $L_3$ & $L_4$ & $L_5$ & $L_6$ & $L_7$ & $L_8$ & $L_9$ & $L_{10}$ & & \\\hline
$\varphi_1(L)$	&				$L_6$ & $L_8$ & $L_3$ & $L_9$ & $L_5$ & $L_1$ & $L_7$ & $L_2$ & $L_4$ & $L_{10}$ & & \\\hline
$\varphi_2(\varphi_1(L))$	&	$L_6$ & $L_2$ & $L_9$ & $L_4$ & $L_8$ & $L_1$ & $L_7$ & $L_5$ & $L_3$ & $L_{10}$ & & \\\hline

			\end{tabular}
\caption{{ Explicit isomorphisms for $(9_3)_1.CDG \cong_{\varphi_1} (9_3)_1.CDH \cong_{\varphi_2} (9_3)_1.CFG$}}
\label{tab:2ndIsomPointsLines}
\end{table}

\begin{table}[!htbp]
	\begin{center}	\setlength{\tabcolsep}{4.5pt}
		\begin{tabular}{|c||c|}
			\hline
			\makecell{ $(9_3)_1.CFH$ \\ \begin{tabular}{cccccccccc}
					$L_1$ & $L_2$ & $L_3$ & $L_4$ & $L_5$ & $L_6$ & $L_7$ & $L_8$ & $L_9$ & $L_{10}$ \\ \hline
					1&1&1&2&3&5&0&0&0&C\\
					2&4&6&4&6&7&4&2&3&F\\
					3&5&7&8&8&8&6&7&5&H\\
					C&H&F&F&H&C& & & &
				\end{tabular} }
			& \makecell{$(9_3)_3.BDF$\\ \begin{tabular}{cccccccccc}
					$L_1$ & $L_2$ & $L_3$ & $L_4$ & $L_5$ & $L_6$ & $L_7$ & $L_8$ & $L_9$ & $L_{10}$ \\ \hline
					1&1&1&8&8&8&9&9&9&B\\
					2&4&6&2&5&3&2&4&3&D\\
					3&5&7&4&6&7&5&7&6&F\\
					&F&B&B& &D&D& &F&
				\end{tabular} }\\ \hline 
			\makecell{$(9_3)_1.CDG$ \\	\begin{tabular}{cccccccccc}
					$L_1$ & $L_2$ & $L_3$ & $L_4$ & $L_5$ & $L_6$ & $L_7$ & $L_8$ & $L_9$ & $L_{10}$ \\ \hline
					1&1&1&2&3&5&0&0&0&C\\
					2&4&6&4&6&7&4&2&3&D\\
					3&5&7&8&8&8&6&7&5&G\\
					C& & &D&G&C& &G&D& \\
				\end{tabular} }
			& \makecell{$(9_3)_3.ACG$ \\ \begin{tabular}{cccccccccc}
					$L_1$ & $L_2$ & $L_3$ & $L_4$ & $L_5$ & $L_6$ & $L_7$ & $L_8$ & $L_9$ & $L_{10}$ \\ \hline
					1&1&1&8&8&8&9&9&9&A\\
					2&4&6&2&5&3&2&4&3&C\\
					3&5&7&4&6&7&5&7&6&G\\
					G& &C&A&G& &C& &A&
				\end{tabular} }\\ \hline 
			
			\makecell{ $(9_3)_2.DFI$ \\				\begin{tabular}{cccccccccc}
					$L_1$ & $L_2$ & $L_3$ & $L_4$ & $L_5$ & $L_6$ & $L_7$ & $L_8$ & $L_9$ & $L_{10}$ \\ \hline
					1&1&1&8&8&8&4&3&2&D\\
					2&4&6&4&2&3&7&5&5&F\\
					3&5&7&6&7&9&9&6&9&I\\
					I&D&F&I&D&F& & & &
				\end{tabular} }
			& \makecell{$(9_3)_3.AEG$ \\ \begin{tabular}{cccccccccc}
					$L_1$ & $L_2$ & $L_3$ & $L_4$ & $L_5$ & $L_6$ & $L_7$ & $L_8$ & $L_9$ & $L_{10}$ \\ \hline
					1&1&1&8&8&8&9&9&9&A\\
					2&4&6&2&5&3&2&4&3&E\\
					3&5&7&4&6&7&5&7&6&G\\
					G&E& &A&G&E& & &A&
				\end{tabular} }\\ \hline 
			
			\makecell{$(9_3)_2.CFI$ \\	\begin{tabular}{cccccccccc}
					$L_1$ & $L_2$ & $L_3$ & $L_4$ & $L_5$ & $L_6$ & $L_7$ & $L_8$ & $L_9$ & $L_{10}$ \\ \hline
					1&1&1&8&8&8&4&3&2&C\\
					2&4&6&4&2&3&7&5&5&F\\
					3&5&7&6&7&9&9&6&9&I\\
					I& &F&I&C&F& &C& &
				\end{tabular} }
			& \makecell{$(9_3)_3.ADG$ \\ \begin{tabular}{cccccccccc}
					$L_1$ & $L_2$ & $L_3$ & $L_4$ & $L_5$ & $L_6$ & $L_7$ & $L_8$ & $L_9$ & $L_{10}$ \\ \hline
					1&1&1&8&8&8&9&9&9&A\\
					2&4&6&2&5&3&2&4&3&D\\
					3&5&7&4&6&7&5&7&6&G\\
					G& & &A&G&D&D& &A&
				\end{tabular} }\\ \hline 

			\makecell{ $(9_3)_2.DFA$ \\ 				\begin{tabular}{cccccccccc}
					$L_1$ & $L_2$ & $L_3$ & $L_4$ & $L_5$ & $L_6$ & $L_7$ & $L_8$ & $L_9$ & $L_{10}$ \\ \hline
					1&1&1&8&8&8&4&3&2&D\\
					2&4&6&4&2&3&7&5&5&F\\
					3&5&7&6&7&9&9&6&9&A\\
					A&D&F& &D&F&A& & &
				\end{tabular} }
			& \makecell{ $(9_3)_3.BEG$ \\ \begin{tabular}{cccccccccc}
					$L_1$ & $L_2$ & $L_3$ & $L_4$ & $L_5$ & $L_6$ & $L_7$ & $L_8$ & $L_9$ & $L_{10}$ \\ \hline
					1&1&1&8&8&8&9&9&9&A\\
					2&4&6&2&5&3&2&4&3&D\\
					3&5&7&4&6&7&5&7&6&G\\
					G& & &A&G&D&D& &A&
				\end{tabular} }  \\ \hline
			\makecell{ $(9_3)_2.DFH$ \\
				\begin{tabular}{cccccccccc}
					$L_1$ & $L_2$ & $L_3$ & $L_4$ & $L_5$ & $L_6$ & $L_7$ & $L_8$ & $L_9$ & $L_{10}$ \\ \hline
					1&1&1&8&8&8&4&3&2&D\\
					2&4&6&4&2&3&7&5&5&F\\
					3&5&7&6&7&9&9&6&9&H\\
					&D&F&H&D&F& & &H&
				\end{tabular} }
				&\\ \hline
		\end{tabular}
	\end{center}
	\caption{The eleven one-line extensions of $(9_3)$ configurations found in \cite{amram}}\label{93+}
\end{table}


\newcommand{\etalchar}[1]{$^{#1}$}
\providecommand{\bysame}{\leavevmode\hbox to3em{\hrulefill}\thinspace}
\providecommand{\MR}{\relax\ifhmode\unskip\space\fi MR }
\providecommand{\MRhref}[2]{%
  \href{http://www.ams.org/mathscinet-getitem?mr=#1}{#2}
}
\providecommand{\href}[2]{#2}

\end{document}